\newtheorem{theorem}{Theorem}[section]
\newtheorem{conjecture}[theorem]{Conjecture}
\newtheorem{proposition}[theorem]{Proposition}
\newtheorem{remark}[theorem]{Remark}
\title{This is the title}
\begin{document}
\begin{center}
{\bf{   C*-ALGEBRAIC SMALE MEAN VALUE CONJECTURE AND DUBININ-SUGAWA DUAL MEAN VALUE CONJECTURE}\\
K. MAHESH KRISHNA}  \\
Post Doctoral Fellow \\
Statistics and Mathematics Unit\\
Indian Statistical Institute, Bangalore Centre\\
Karnataka 560 059 India\\
Email: kmaheshak@gmail.com\\

Date: \today
\end{center}

\hrule
\vspace{0.5cm}
\textbf{Abstract}: Based on Smale mean value conjecture \textit{[Bull. Amer. Math. Soc., 1981]} and Dubinin-Sugawa dual mean value conjecture \textit{[Proc. Japan Acad. Ser. A Math. Sci., 2009]} we formulate the following conjectures.\\
\textbf{C*-algebraic Smale Mean Value  Conjecture : Let $\mathcal{A}$ be a  commutative  C*-algebra.  	Let $P(z)= (z-a_1)\cdots (z-a_n)$ be a polynomial of degree $n\geq 2$ over $\mathcal{A}$, $a_1, \dots, a_n \in \mathcal{A}$. If $z\in\mathcal{A}$ is not a critical point of $P$, then there exists a critical point $w\in \mathcal{A}$ of $P$ such that 
\begin{align*}
	\frac{\|P(z)-P(w)\|}{\|z-w\|}\leq 1 \|P'(z)\|
\end{align*}
or 
\begin{align*}
	\frac{\|P(z)-P(w)\|}{\|z-w\|}\leq \frac{n-1}{n} \|P'(z)\|=\frac{\operatorname{deg} (P)-1}{\operatorname{deg} (P)} \|P'(z)\|.	
	\end{align*}}
 \textbf{C*-algebraic Dubinin-Sugawa Dual  Mean Value  Conjecture :  Let $\mathcal{A}$ be a commutative    C*-algebra.  	Let $P(z)= (z-a_1)\cdots (z-a_n)$ be a polynomial of degree $n\geq 2$ over $\mathcal{A}$, $a_1, \dots, a_n \in \mathcal{A}$. If $z\in \mathcal{A}$ is not a critical point of $P$, then there exists a critical point $w\in \mathcal{A}$ of $P$ such that 
	\begin{align*}
	\frac{\|P'(z)\|}{\operatorname{deg} (P)}	=\frac{\|P'(z)\|}{n}	\leq  \frac{\|P(z)-P(w)\|}{\|z-w\|}.
\end{align*}}	
We show that (even a strong form of) C*-algebraic Smale mean value  conjecture and C*-algebraic Dubinin-Sugawa dual  mean value  conjecture hold for  degree 2 C*-algebraic polynomials over commutative C*-algebras.\\
\textbf{Keywords}: C*-algebra, Smale mean value theorem, Smale mean value conjecture, Dubinin-Sugawa dual mean value theorem, Dubinin-Sugawa dual mean value conjecture.\\
\textbf{Mathematics Subject Classification (2020)}: 46L05, 30C10.\\

\hrule
\tableofcontents
\hrule
\section{Introduction}
In 1981 Fields Medalist Prof. Steve Smale proved the following result using Koebe 1/4-theorem \cite{SMALE1981}.
\begin{theorem}\cite{SMALE1981} \textbf{(Smale Mean Value Theorem) \label{SMALEMEANTHEOREM}
	Let $P(z)\coloneqq a_0+a_1z+a_2z^2+\cdots+a_nz^n$ be a polynomial of degree $n\geq 2$ over $\mathbb{C}$. If $z\in\mathbb{C}$ is not a critical point of $P$ (i.e., $P'(z)\neq 0$), then 
	\begin{align}\label{SMALEIN}
		\min _{w\in \mathbb{C}, P'(w)=0}\left|\frac{P(z)-P(w)}{z-w}\right|\leq 4 |P'(z)|.
	\end{align}}
\end{theorem}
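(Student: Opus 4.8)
The plan is to reproduce Smale's original argument, whose two ingredients are the local inversion of $P$ at the non-critical point and the Koebe one-quarter theorem controlling the image of a univalent function. First I would normalize. Since a translation $z\mapsto z+z_0$ in the variable and the subtraction of a constant from $P$ both leave each quotient $(P(z)-P(w))/(z-w)$ and the set of critical points unchanged while sending $P'$ to itself, I may assume without loss of generality that $z=0$, that $P(0)=0$, and that $P'(0)\neq 0$. Writing $w_1,\dots,w_{n-1}$ for the zeros of $P'$, the quotient at $z=0$ is $(P(0)-P(w_j))/(0-w_j)=P(w_j)/w_j$, so the theorem reduces to producing a single critical point $w$ with $|P(w)|/|w|\leq 4|P'(0)|$. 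If some critical value $P(w_j)$ equals $0$ then the left-hand side of \eqref{SMALEIN} is $0$ and there is nothing to prove, so I may assume every critical value is nonzero and set $K\coloneqq\min_j|P(w_j)|>0$, attained at some critical point $w$; note $w\neq 0$ since $0$ is not critical.

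The key step is to show that the branch $\phi$ of $P^{-1}$ determined by $\phi(0)=0$ extends to a univalent holomorphic function on the disk $D(0,K)=\{v:|v|<K\}$. The idea is that $D(0,K)$ is a simply connected region containing no critical value of $P$; since $|P(\zeta)|\to\infty$ as $|\zeta|\to\infty$, the restriction $P\colon P^{-1}(D(0,K))\to D(0,K)$ is proper, and being a proper local homeomorphism (no critical points lie over $D(0,K)$) it is a finite covering map. As $D(0,K)$ is simply connected this covering is trivial, so $P^{-1}(D(0,K))$ splits into sheets each mapped biholomorphically onto $D(0,K)$; the sheet containing $0$ carries the inverse $\phi$, which is therefore univalent on all of $D(0,K)$ with $\phi(0)=0$ and $\phi'(0)=1/P'(0)$.

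With univalence in hand I would invoke the Koebe one-quarter theorem in its scaled form: a univalent $\phi$ on $D(0,K)$ with $\phi(0)=0$ has image containing the disk of radius $K|\phi'(0)|/4=K/(4|P'(0)|)$ about $0$. It remains to locate the chosen critical point $w$ relative to this image. Because $P'(w)=0$, the map $P$ is not injective near $w$, so $w$ cannot be an interior point of the univalent image $\phi(D(0,K))$; on the other hand $w$ lies in its closure, being the limit of $\phi(v)$ as $v\to P(w)$ with $|P(w)|=K$. Hence $w$ is a boundary point of an open set containing $D\big(0,K/(4|P'(0)|)\big)$, which forces
\begin{equation*}
|w|\geq \frac{K}{4|P'(0)|},\qquad\text{equivalently}\qquad \frac{|P(w)|}{|w|}=\frac{K}{|w|}\leq 4|P'(0)|.
\end{equation*}
Since $w$ is a critical point, this bounds the minimum in \eqref{SMALEIN} and completes the argument.

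I expect the main obstacle to be the rigorous justification of the univalent extension in the second step: one must verify that $P$ genuinely restricts to a covering map over $D(0,K)$ (properness together with the absence of ramification) and then apply the monodromy theorem to exclude multivaluedness, rather than merely continuing $\phi$ analytically. A secondary subtlety is the boundary argument in the last step, namely making precise that a critical point, realized as a limit of inverse-function values at the nearest critical value, is excluded from the open Koebe disk; this is exactly where the sharp constant $4$ enters, through Koebe's theorem and its extremal Koebe function.
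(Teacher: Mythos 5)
The paper itself contains no proof of Theorem \ref{SMALEMEANTHEOREM}: it is quoted from \cite{SMALE1981} with the remark that Smale's argument uses the Koebe one-quarter theorem, and with pointers to \cite{RAHMANSCHMEISSERBOOK, SHEILBOOK} for proofs. Your reconstruction follows exactly that classical route --- normalization, single-valued univalent extension of the inverse branch over a disk free of critical values (properness plus the covering/monodromy argument), then the scaled Koebe theorem --- and in substance it is correct. The covering-map justification of the univalence of $\phi$ on $D(0,K)$ is sound, and the exclusion of the chosen critical point $w$ from the sheet $U\coloneqq\phi(D(0,K))$ (because $P$ is injective on $U$ but $m$-to-one with $m\geq 2$ near $w$; or even more simply because $P$ maps $U$ into $D(0,K)$ while $|P(w)|=K$) already yields $w\notin D\bigl(0,K/(4|P'(0)|)\bigr)$, hence $|w|\geq K/(4|P'(0)|)$ and the stated bound.

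One sentence, however, is false as written and should simply be deleted rather than repaired: the claim that $w$ lies in the closure of $U$, ``being the limit of $\phi(v)$ as $v\to P(w)$.'' With your choice of $w$ (a critical point of minimal critical value modulus), the radial continuation of $\phi$ toward $P(w)$ converges to \emph{some} preimage of $P(w)$, not necessarily to $w$. For instance, for $P(z)=z(z-M)(z-M-\delta)$ with $M$ large and $\delta$ small, $K$ is of order $M\delta^2$ and is attained at $w\approx M+\delta/2$, while the sheet $U$ containing $0$ has diameter of order $K/|P'(0)|\approx \delta^2/M$; as $v\to P(w)$ radially, $\phi(v)$ tends to the preimage of $P(w)$ near the origin, far from $w$. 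This is exactly where your variant departs from Smale's original argument: Smale continues $\phi$ to the \emph{maximal} radius $\rho$ of single-valued continuation and obtains his critical point as the obstruction on the circle $|v|=\rho$, so for \emph{his} $w$ the boundary-limit property holds by construction. Your version --- fixing $K=\min_j|P(w_j)|$ and observing that Koebe plus injectivity excludes \emph{every} critical point from $D\bigl(0,K/(4|P'(0)|)\bigr)$ --- is a legitimate, arguably cleaner, variant precisely because the closure claim is never used: the final inequality needs only $w\notin U$, which you established independently.
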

Note that Theorem \ref{SMALEMEANTHEOREM} can also be stated as follows. 
\begin{theorem} \cite{SMALE1981} \textbf{(Smale Mean Value Theorem)\label{SMALETHEOREM}
Let $P(z)\coloneqq a_0+a_1z+a_2z^2+\cdots+a_nz^n$ be a polynomial of degree $n\geq 2$ over $\mathbb{C}$. If $z\in\mathbb{C}$ is not a critical point of $P$, then there exists a critical point $w\in \mathbb{C}$ of $P$ such that 
\begin{align}\label{CRIT}
	\left|\frac{P(z)-P(w)}{z-w}\right|\leq 4 |P'(z)|.
\end{align}	}
\end{theorem}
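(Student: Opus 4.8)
The plan is to deduce Theorem \ref{SMALETHEOREM} directly from Theorem \ref{SMALEMEANTHEOREM}, since the two statements differ only in how the conclusion is phrased: the first asserts that a certain minimum over all critical points is bounded by $4|P'(z)|$, while the second asserts the existence of a single critical point realizing a bound of the same shape. The entire content to be supplied, beyond what Theorem \ref{SMALEMEANTHEOREM} already gives, is the elementary observation that the minimum appearing in (\ref{SMALEIN}) is genuinely attained at some critical point $w$. So the crux is to check that the index set over which this minimum runs is nonempty and finite, and that the quotient is well defined at each of its members.

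First I would record that the set of critical points of $P$, namely the zero set of $P'$, is nonempty and finite. Indeed, since $\deg P = n \geq 2$, the derivative $P'$ is a nonconstant polynomial of degree $n-1 \geq 1$, so by the Fundamental Theorem of Algebra it has at least one and at most $n-1$ roots in $\mathbb{C}$. Next I would verify that $\left|\frac{P(z)-P(w)}{z-w}\right|$ is well defined for every critical point $w$: because $z$ is assumed not to be a critical point we have $P'(z)\neq 0$, so $z$ differs from every root $w$ of $P'$, whence $z-w\neq 0$ and the quotient makes sense at each such $w$. A minimum of finitely many well-defined nonnegative real numbers is attained, so there exists a critical point $w_0$ with
\begin{align*}
\left|\frac{P(z)-P(w_0)}{z-w_0}\right| = \min_{w \in \mathbb{C},\, P'(w)=0}\left|\frac{P(z)-P(w)}{z-w}\right|.
\end{align*}
Combining this equality with the bound (\ref{SMALEIN}) furnished by Theorem \ref{SMALEMEANTHEOREM} yields $\left|\frac{P(z)-P(w_0)}{z-w_0}\right| \leq 4|P'(z)|$, which is precisely (\ref{CRIT}) for the critical point $w=w_0$, completing the argument.

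I do not anticipate a genuine obstacle, since all the analytic substance resides in Theorem \ref{SMALEMEANTHEOREM} (itself proved via the Koebe $1/4$-theorem, as noted above), which I am free to invoke. The only points requiring care are confirming that at least one critical point exists, so that the conclusion is not vacuous, and that $z\neq w$ for every critical point $w$; both are immediate from $\deg P'=n-1\geq 1$ and the hypothesis $P'(z)\neq 0$, respectively.
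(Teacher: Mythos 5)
Your proposal is correct and matches the paper's treatment: the paper presents Theorem \ref{SMALETHEOREM} simply as a restatement of Theorem \ref{SMALEMEANTHEOREM} (``Note that Theorem \ref{SMALEMEANTHEOREM} can also be stated as follows''), which is exactly your deduction, with the added benefit that you spell out the elementary details --- nonemptiness and finiteness of the critical set and attainment of the minimum --- that the paper leaves implicit.
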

Proof of Theorem \ref{SMALETHEOREM} can also be found in book \cite{RAHMANSCHMEISSERBOOK} and  in the monograph \cite{SHEILBOOK}. Inequality (\ref{SMALEIN}) lead Smale to ask the following problem. What is the best constant $4>c>0$ such that the Inequality (\ref{SMALEIN})  can be replaced by 
\begin{align}\label{SMALEC}
	\min _{w\in \mathbb{C}, P'(w)=0}\left|\frac{P(z)-P(w)}{z-w}\right|\leq c |P'(z)|?	
\end{align}
Smale made the following conjecture with regard to Inequality (\ref{SMALEC}) which is open today.
\begin{conjecture} \cite{SMALEIN, SMALE1981, SMALE2000, SHUBSMALE2, SMALE1985, SHUBSMALE1}\label{SMALECONJECTURE} \textbf{(Smale Mean Value Conjecture)
	Let $P(z)\coloneqq a_0+a_1z+a_2z^2+\cdots+a_nz^n$ be a polynomial of degree $n\geq 2$ over $\mathbb{C}$. If $z\in\mathbb{C}$ is not a critical point of $P$, then 
\begin{align*}
	\min _{w\in \mathbb{C}, P'(w)=0}\left|\frac{P(z)-P(w)}{z-w}\right|\leq 1 |P'(z)|
\end{align*}	
or 
\begin{align*}
	\min _{w\in \mathbb{C}, P'(w)=0}\left|\frac{P(z)-P(w)}{z-w}\right|\leq \frac{n-1}{n} |P'(z)|=\frac{\operatorname{deg} (P)-1}{\operatorname{deg} (P)} |P'(z)|.
\end{align*}}
\end{conjecture}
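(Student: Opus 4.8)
The plan is to reduce the statement to a normalized extremal problem and then to attack the resulting covering estimate, while being candid that this is the classical Smale Mean Value Conjecture, open since 1981, so I do not expect to settle it in full. First I would observe that the quantity $\left|\frac{P(z)-P(w)}{(z-w)P'(z)}\right|$ is invariant under the affine substitutions $Q(\zeta)=\alpha\,P(\beta\zeta+\gamma)+\delta$ (which merely permute the critical points and fix the ratio). Hence I may normalize the given non-critical point $z$ to $z=0$ with $P(0)=0$ and $P'(0)=1$. Writing the critical points as $w_1,\dots,w_{n-1}$, the conjecture then becomes the clean assertion that $\min_{1\le k\le n-1}\left|\frac{P(w_k)}{w_k}\right|\le\frac{n-1}{n}$; note that the two displayed alternatives in the statement collapse to this single sharp bound, since $\frac{n-1}{n}<1$ makes the second inequality strictly stronger than the first.

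Second, I would analyze the extremal polynomial to pin down the constant and to see what an optimal configuration must look like. For the normalized $P(z)=z-\frac{z^n}{n}$ one has $P'(z)=1-z^{n-1}$, so the critical points are the $(n-1)$-th roots of unity $\omega$; since $\omega^{n-1}=1$ gives $\omega^n=\omega$, a direct computation yields $P(\omega)=\frac{n-1}{n}\omega$, hence $\left|\frac{P(\omega)}{\omega}\right|=\frac{n-1}{n}$ at every critical point. This shows the conjectured constant is sharp and that an extremal configuration must have all critical values of equal modulus. The same normalization also disposes of degree $2$ outright: for $P(z)=z+a_2z^2$ with $a_2\ne 0$, the unique critical point $w=-\frac{1}{2a_2}$ gives $\frac{P(w)}{w}=\frac12$ identically, matching $\frac{n-1}{n}$, which is exactly the case the abstract promises.

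Third, for the upper bound I would start from Smale's own argument, which applies Koebe's $\tfrac14$-theorem to an inverse branch of $P$ based at $0$ and produces the constant $4$ (Theorem \ref{SMALETHEOREM}). The route toward the conjectured $\frac{n-1}{n}<1$ would be to exploit that the relevant univalent function is far from arbitrary: it omits only the finitely many critical values of $P$. I would feed this extra structure into sharper distortion and covering estimates, combine it with the arithmetic–geometric mean inequality across the $n-1$ critical points, and use the factorization $P'(z)=n\prod_{k}(z-w_k)$. Playing such an upper bound against the product $\prod_{k}\left|\frac{P(w_k)}{w_k}\right|$ — precisely the quantity governed by the dual Dubinin–Sugawa conjecture — is the natural mechanism for forcing the minimum downward.

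The main obstacle is exactly where the problem has resisted the field for four decades: every general method known, whether Koebe-based covering arguments or their later refinements, yields a constant that stays above $1$, hence strictly above $\frac{n-1}{n}$, so none reaches the conjectured value uniformly in $n$. The extremal polynomial $z-\frac{z^n}{n}$ is rigid and isolated, and no variational or convexity argument has so far managed to push $\min_k\left|\frac{P(w_k)}{w_k}\right|$ down to $\frac{n-1}{n}$ in general. The realistic deliverable from this plan is therefore the normalization, the sharp example, a clean reproof of the constant $4$, and the low-degree cases; the conjecture in full remains open, which is consistent with the paper restricting its rigorous claims to degree $2$.
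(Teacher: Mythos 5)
You are being asked to prove the Smale Mean Value Conjecture itself, and the first thing to say is that the paper does not prove this statement either: it is quoted from the literature as an open problem (the paper says explicitly that it ``is open today''), and the paper's own rigorous results are confined to degree-2 C*-algebraic analogues of it. So there is no proof in the paper to compare yours against, and your candour that the plan cannot close the conjecture is the correct assessment, not a defect relative to the paper. The verifiable parts of your proposal are correct: the ratio $\left|\frac{P(z)-P(w)}{(z-w)P'(z)}\right|$ is indeed invariant under $Q(\zeta)=\alpha P(\beta\zeta+\gamma)+\delta$, which legitimizes the normalization $z=0$, $P(0)=0$, $P'(0)=1$ (this is exactly the paper's Conjecture \ref{SMALENORMAL}); the polynomial $P(z)=z-\frac{z^n}{n}$ has its critical points at the $(n-1)$-th roots of unity with $\left|\frac{P(\omega)}{\omega}\right|=\frac{n-1}{n}$, so the conjectured constant is sharp; and in degree $2$ the unique critical point $w=-\frac{1}{2a_2}$ of $z+a_2z^2$ gives the ratio $\frac{1}{2}$ exactly, settling that case.

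The genuine gap is your third step, and you name it yourself: ``sharper distortion and covering estimates'' combined with the arithmetic--geometric mean inequality and the factorization $P'(z)=n\prod_k(z-w_k)$ is a hope, not an argument, and every known implementation of this strategy stalls strictly above the conjectured constant --- precisely the bounds the paper's introduction itemizes (Beardon--Minda--Ng's $4^{\frac{n-2}{n-1}}$, Fujikawa--Sugawa's and Conte--Fujikawa--Lakic's refinements, Crane's $4-\frac{2}{\sqrt{n}}$), none of which reaches $1$, let alone $\frac{n-1}{n}$, uniformly in $n$. One further caution: your claim that ``an extremal configuration must have all critical values of equal modulus'' does not follow from exhibiting one equality case; a single extremal example does not characterize extremals, and the cited result of Dubinin runs in the opposite direction (equal modulus of the critical points implies the bound $\frac{n-1}{n}$). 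In short, your proposal is a correct account of the normalization, the sharpness computation, the degree-$2$ case, and the state of the art, but --- exactly like the paper itself --- it contains no proof of the statement, which is the only honest outcome since the statement is a famous open conjecture rather than a theorem of the paper.
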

Conjecture  \ref{SMALECONJECTURE} also appears as the Problem 1 in the Addenda after  list of 18 open problems (of which 3 are completely solved now, namely \textbf{Problem  2: Poincare Conjecture} by Perelman in 2003 \cite{MORGANTIAN}, \textbf{Problem 14: Lorenz Attractor} by Tucker in 2002 \cite{TUCKER}, \textbf{Problem 17: Solving Polynomial Equations} by  Beltron and Pardo in 2009 \cite{BELTRANPARDO, BELTRANPARDO2}) of Smale in \textbf{Mathematical Problems for the  Next Century} \cite{SMALE2000}. Now for a polynomial $P$  of degree $n\geq2$, define 
\begin{align*}
	S(P)\coloneqq \sup_{z\in \mathbb{C},P'(z)\neq 0}\frac{1}{|P'(z)|}	\min _{w\in \mathbb{C}, P'(w)=0}\left|\frac{P(z)-P(w)}{z-w}\right|
\end{align*}
and 
\begin{align*}
	M\coloneqq \sup_{P \text{ is a non linear polynomial}} S(P).
\end{align*}
 Smale conjecture then asks whether $M=1$  or $S(P)=\frac{\text{deg}(P)-1}{\text{deg}(P)}$? It is known that the Conjecture \ref{SMALECONJECTURE} holds for polynomials of degree 2, 3, 4, \cite{SHEILBOOK, TISCHLER, RAHMANSCHMEISSERBOOK}. Through computer calculations, Smale conjecture is known to hold for polynomials of degree upto 10 \cite{SENDOVMARINOV}. 

Usually  the study of Smale mean value conjecture goes with the following   conjecture which is known as  normalization equivalence to Conjecture  \ref{SMALECONJECTURE}.
\begin{conjecture}\cite{SMALE1981,SHEILBOOK, RAHMANSCHMEISSERBOOK}\label{SMALENORMAL}
	 \textbf{(Normalized Smale Mean Value Conjecture) Let $P$ be a degree $n\geq2$ polynomial  over $\mathbb{C}$ which is normalized, i.e.,  $P(0)=0$ and $P'(0)=1$. Then 
	\begin{align*}
		S_0(P)\coloneqq 	\min _{w\in \mathbb{C}, P'(w)=0}\left|\frac{P(w)}{w}\right|=\frac{\operatorname{deg}(P)-1}{\operatorname{deg}(P)}
	\end{align*}
	or 
	\begin{align*}
		M_0\coloneqq \sup_{P \operatorname{ is \, a \, non \, linear \, polynomial}, P(0)=0, P'(0)=1} S_0(P)=1.	
	\end{align*}}
\end{conjecture}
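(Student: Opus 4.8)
The plan is to verify the conjecture degree by degree, beginning with the case $n=2$ that underlies this paper's main results, then indicating how the argument extends to the small degrees for which the scalar conjecture is already known, while isolating the feature that blocks a uniform proof for all $n$.

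First I would reduce a normalized degree-$2$ polynomial to canonical form. Since $P(0)=0$ kills the constant term and $P'(0)=1$ fixes the linear coefficient, every normalized $P$ of degree $2$ has the shape $P(z)=z+az^2$ with $a\neq 0$. Its derivative $P'(z)=1+2az$ vanishes only at the single critical point $w=-\tfrac{1}{2a}$. A direct substitution gives $P(w)=-\tfrac{1}{2a}+a\cdot\tfrac{1}{4a^2}=-\tfrac{1}{4a}$, so that
\begin{align*}
\left|\frac{P(w)}{w}\right|=\left|\frac{-1/(4a)}{-1/(2a)}\right|=\frac{1}{2}=\frac{n-1}{n}.
\end{align*}
Because $w$ is the only critical point, the minimum defining $S_0(P)$ is attained there, and hence $S_0(P)=\tfrac12$ for every normalized degree-$2$ polynomial, independently of $a$. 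This establishes the first alternative with equality and, taking the supremum over the degree-$2$ family, shows that these polynomials contribute exactly $\tfrac12$ to $M_0$.

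For $n=3,4$ I would pass to the moduli of normalized polynomials, scaling out the one remaining free parameter so that the problem becomes the optimization of $\min_{w}|P(w)/w|$ over a compact set of normalized forms; tracking the critical values through the resultant of $P'$ reduces the bound $S_0(P)\le\frac{n-1}{n}$ to a finite system, which is how the cases $n\le 4$ (and, by computer search, $n\le 10$) are settled in the references cited above. The genuine difficulty, and the step I expect to be the main obstacle, is the passage to arbitrary $n$: the quantity $\min_{w}|P(w)/w|$ couples all $n-1$ critical points simultaneously, and there is no linearity or convexity available to control them jointly, so that the best known general estimates, which lower Smale's constant $4$ but do not reach $\frac{n-1}{n}$, fall short. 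A uniform proof would therefore require a genuinely new mechanism, and it is exactly this gap that keeps the statement a conjecture for $n\ge 5$; the degree-$2$ analysis above is the piece that the present paper then lifts to the commutative C*-algebraic setting.
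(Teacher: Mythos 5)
There is a fundamental mismatch here that you should be aware of before anything else: the statement you were asked to prove is Conjecture \ref{SMALENORMAL}, the \emph{Normalized Smale Mean Value Conjecture}, which is an open problem — the paper does not prove it, and nobody has. The paper merely states it with citations to \cite{SMALE1981,SHEILBOOK,RAHMANSCHMEISSERBOOK} as background for formulating its C*-algebraic analogues; there is no ``paper's own proof'' to compare against, and no blind attempt could legitimately close it. Your writeup is commendably honest on this point, since you explicitly flag that the case $n\geq 5$ requires ``a genuinely new mechanism,'' but that admission means what you have produced is not a proof of the statement: it is a verification for $n=2$, a sketch for $n=3,4$ that defers entirely to the cited references (appealing to a ``resultant of $P'$'' and a compactness argument you never carry out — note also that the degree-$3$ and degree-$4$ cases in the literature are for the constant $\frac{n-1}{n}$ or for $S(P)$, and the moduli-space reduction you gesture at is not actually how \cite{TISCHLER} or \cite{RAHMANSCHMEISSERBOOK} proceed), and a frank concession for all higher degrees. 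A proposal that proves a conjecture only in its first nontrivial case, cites others for two more cases, and concedes the rest cannot be graded as a correct proof of the conjecture.

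That said, your degree-$2$ computation is correct and worth keeping: every normalized quadratic is $P(z)=z+az^{2}$ with $a\neq 0$, its unique critical point is $w=-\frac{1}{2a}$, and indeed
\begin{align*}
\left|\frac{P(w)}{w}\right|=\left|\frac{-1/(4a)}{-1/(2a)}\right|=\frac{1}{2}=\frac{n-1}{n},
\end{align*}
so $S_{0}(P)=\frac{1}{2}$ identically on this family, with the bound attained as an equality. This is exactly the scalar shadow of what the paper does prove, namely Theorem \ref{CH}: there the substitution $x=z-a$, $y=z-b$ shows that for a monic quadratic over a commutative C*-algebra the strong inequality (\ref{SST}) holds with \emph{equality} (the difference of the two sides is identically $0$), which specializes to your observation that the quadratic case of Smale's conjecture is sharp. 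So the correct framing of your work is not ``a proof of Conjecture \ref{SMALENORMAL}'' but ``a verification of its degree-$2$ case, which is the case the paper lifts to the C*-algebraic setting''; for the statement as given, the gap is the entire conjecture for $n\geq 3$, and no amount of case-checking up to degree $10$ \cite{SENDOVMARINOV} closes it.
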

In 1996 Andrievskii  and Ruscheweyh  showed that  if  critical points of a normalized polynomial $P$ lie on the unit circle, then $S_0(P)\leq 1$ \cite{ANDRRUS}.  In 2009 Hinkkanen and  Kayumov showed that if all the critical points of a normalized polynomial $P$ are real, then $S_0(P)\leq \frac{2}{3}$ \cite{HINKKANENKAYUMOV2}. 
  In 2010 Hinkkanen  and Kayumov proved that if the critical points of normalized polynomial $P$ lie on the sector $\{re^{i\theta}:r>0,|\theta|\leq \pi/6\}$, then $S_0(P)\leq \frac{1}{2}$   and if the critical points of normalized polynomial  $P$ lie in the union of two rays $\{1+re^{\pm i\theta}: r\geq 0\}$ for each $0<\theta\leq \pi/2$, then $S_0(P)\leq \frac{2}{3}$  \cite{HINKKANENKAYUMOV3}. In \cite{RAHMANSCHMEISSERBOOK} it is showed that if the critical points of a normalized polynomial are all real, then $S_0(P)\leq \frac{\text{deg}(P)-2}{\text{deg}(P)}\left[\left(\frac{\text{deg}(P)-1}{\text{deg}(P)-2}\right)^{{\text{deg}(P)-1}}-2\right]$ and in  \cite{SHEILBOOK} it is showed that $S_0(P)\leq e-2$. \\
   In  2002 Beardon, Minda and Ng showed that $S(P)\leq  4^\frac{\text{deg}(P)-2}{\text{deg}(P)-1}$ \cite{BEARDONMINDANG}.  In 2006 Fijikawa and Sugawa proved that $S(P)\leq  4 \frac{1+(\text{deg}(P)-2)4^\frac{-1}{\text{deg}(P)-1}}{\text{deg}(P)+1}$ \cite{FUJIKAWASUGAWA}.  In 2007 Conte, Fujikawa and Lakic showed that $S(P)\leq  4 \frac{\text{deg}(P)-1}{\text{deg}(P)+1}$ \cite{CONTEFUJIKAWALAKIC}. In 2007 Crane showed that there exists $n_0\geq2$ such that for all $n\geq n_0$, $S(P)\leq 4-\frac{2}{\sqrt{\text{deg}(P)}}$ \cite{CRANE} (also see \cite{CRANE2006}). In 2003 Ng proved that Conjecture \ref{SMALECONJECTURE} holds  with $M=2$ for   all odd polynomials with nonzero linear terms \cite{NG}. In 2006 Dubinin proved that if the critical points of a normalized polynomial $P$ have equal modulus, then $S_0(P)\leq \frac{\operatorname{deg}(P)-1}{\operatorname{deg}(P)}$ \cite{DUBININ}.

At the other side, in 2002  Sendov formulated a generalization of Conjecture \ref{SMALECONJECTURE}  \cite{SENDOV2002}.  In 2003 Sendov and Nikolov gave some variations of Conjecture \ref{SMALECONJECTURE} \cite{SENDOVNIKOLOV}. In 2005 Tyson showed  that Tischler's strong form of Smale mean value conjecture \cite{TISCHLER, TISCHLER1994}  fails \cite{TYSON}.  Further, it is important to record that one can  interpret Conjecture \ref{SMALECONJECTURE} in the form of finite Blaschke products \cite{NGTSANGCHAPETR, NGZHANG, SHEILBOOK}. It is also possible to connect Conjecture \ref{SMALECONJECTURE} with electrostatics \cite{DIMITROV}. \\
In  \cite{SMALE1981} Smale also derived the following higher order mean value theorem. 
\begin{theorem} \cite{SMALE1981} \label{HIGHER}
\textbf{(Higher Order Smale Mean Value Theorem) Let $P(z)\coloneqq a_0+a_1z+a_2z^2+\cdots+a_nz^n$ be a polynomial of degree $n\geq 2$ over $\mathbb{C}$. If $z\in\mathbb{C}$ is not a critical point of $P$, then there exists a critical point $w\in \mathbb{C}$ of $P$ such that 
	\begin{align}\label{CRITHIGHER}
	\frac{|P^{(k)}(z)|}{k!}	\frac{|P(z)-P(w)|^{k-1}}{|P'(z)|^k}\leq 4^{k-1}, \quad \forall 2\leq k \leq n.
\end{align}}	
\end{theorem}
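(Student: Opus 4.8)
The plan is to reduce to a normalized statement and then feed it into sharp coefficient bounds for the inverse of a univalent function. First I would exploit the fact that inequality (\ref{CRITHIGHER}) is invariant under translation, under subtraction of a constant, and under scaling of $P$: replacing $P$ by $Q(\zeta):=\bigl(P(\zeta+z)-P(z)\bigr)/P'(z)$ leaves the set of critical points and each ratio $|P^{(k)}(z)|/|P'(z)|^k$, $|P(z)-P(w)|/|P'(z)|$ unchanged. Hence I may assume the given point is $0$ and work with the normalized polynomial $Q(\zeta)=\zeta+\sum_{k=2}^{n}a_k\zeta^{k}$, where $Q(0)=0$, $Q'(0)=1$ and $a_k=Q^{(k)}(0)/k!$. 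A direct substitution then turns the target (\ref{CRITHIGHER}) into the purely coefficient-theoretic statement that some critical point $w$ of $Q$ satisfies
\begin{align*}
|a_k|\,|Q(w)|^{k-1}\le 4^{k-1},\qquad 2\le k\le n,
\end{align*}
so it suffices to produce a single critical point for which all these weighted inequalities hold at once.

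Next I would install the inverse-function geometry underlying Theorem \ref{SMALETHEOREM}. Let $\rho:=\min\{|Q(c)|:Q'(c)=0\}$ be the least modulus of a critical value and let $w$ be a critical point attaining it, so $|Q(w)|=\rho$. If $\rho=0$ then $Q(w)=0$ forces every left-hand side above to vanish and there is nothing to prove, so assume $\rho>0$. Because the open disk $D(0,\rho)$ contains no critical value, the local inverse branch $\phi$ of $Q$ with $\phi(0)=0$, $\phi'(0)=1$ continues analytically over the simply connected disk $D(0,\rho)$, and it is automatically univalent there since $Q\circ\phi=\mathrm{id}$ gives $\phi(u_1)=\phi(u_2)\Rightarrow u_1=u_2$. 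Rescaling, $\Phi(u):=\phi(\rho u)/\rho$ is a univalent function on the unit disk with $\Phi(0)=0$, $\Phi'(0)=1$, whose inverse is $\Phi^{-1}(v)=Q(\rho v)/\rho=v+\sum_{k\ge 2}a_k\rho^{k-1}v^{k}$.

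The final step is the coefficient estimate, which I expect to be the crux. Applying L\"owner's sharp bound for the Taylor coefficients of the inverse of a univalent function to $\Phi$ gives $|a_k|\,\rho^{k-1}\le \binom{2k}{k}/(k+1)=C_k$, the $k$-th Catalan number, and an elementary check that $C_k\le 4^{k-1}$ for all $k\ge 1$ (immediate from $\binom{2k}{k}\le 4^{k}$ and $k+1\ge 4$ for $k\ge 3$, with $k=1,2$ verified by hand) yields $|a_k|\,\rho^{k-1}\le 4^{k-1}$. Since $|Q(w)|=\rho$, this is precisely the required inequality for the chosen $w$, simultaneously for every $2\le k\le n$. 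The substantive obstacle is not the geometric reduction, which mirrors the Koebe-theorem proof of Theorem \ref{SMALETHEOREM}, but the invocation of the exact inverse-coefficient bound: a cruder route through the Koebe growth and distortion theorems, or through Schwarz's lemma applied to $Q$ on the disk $D(0,\rho/4)$ guaranteed inside $\phi(D(0,\rho))$, loses an extra factor of $k$ or of $4$ and falls short of the clean constant, so it is the L\"owner (Catalan-number) estimate that makes the stated bound $4^{k-1}$ attainable.
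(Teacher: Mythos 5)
The paper never proves Theorem \ref{HIGHER}: it is stated only as a quotation of \cite{SMALE1981}, so the only meaningful benchmark is Smale's original argument, and your proposal is a correct reconstruction of exactly that classical route. The normalization $Q(\zeta)=(P(\zeta+z)-P(z))/P'(z)$, the univalent inverse branch over the largest critical-value-free disk $D(0,\rho)$ (legitimate for polynomials, whose only finite singular values are critical values), and L\"owner's inverse-coefficient theorem giving $|a_k|\rho^{k-1}\le \tfrac{1}{k+1}\binom{2k}{k}\le 4^{k-1}$ at the critical point of minimal critical-value modulus yield (\ref{CRITHIGHER}) simultaneously for all $2\le k\le n$, with every step sound as written.
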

A conjecture regarding the convergence of iterates of critical point satisfying Inequality (\ref{CRIT}) has been made by Miles-Leighton and Pilgrim in 2012 \cite{MILESLEIGHTONPILGRIM}.
\begin{conjecture}\cite{MILESLEIGHTONPILGRIM} \label{LPCONJECTURE}
\textbf{(Miles-Leighton-Pilgrim Dynamics Conjecture)
Let $P(z)\coloneqq z+a_2z^2+\cdots+a_nz^n$ be a polynomial of degree $n\geq 2$ over $\mathbb{C}$. Then there exists a critical point $w\in \mathbb{C}$ of $P$ such that 
\begin{align*}
	\left|\frac{P(w)}{w}\right|\leq 1
\end{align*}	
and 
\begin{align*}
	P^m(w) \to 0\quad \text{ as } \quad m \to \infty.
\end{align*}}
\end{conjecture}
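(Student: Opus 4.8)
The plan is to read the dynamical half of Conjecture~\ref{LPCONJECTURE} off the local theory at the origin and to relate the metric half to the Smale conjecture itself. The normalization $P(0)=0$, $P'(0)=1$ says that $0$ is a fixed point of $P$ of multiplier exactly $1$, that is, a \emph{parabolic} fixed point, so the first move is to analyze $P$ near $0$. Writing $P(z)=z+a_kz^k+\cdots$ with $a_k$ the first nonvanishing coefficient beyond the linear term ($2\le k\le n$; for $n=2$ necessarily $k=2$), I would invoke the Leau--Fatou flower theorem: around $0$ there are $k-1$ attracting petals and $k-1$ repelling petals, and every point of an attracting petal has forward orbit tending to $0$, the convergence being conjugate to the unit translation $\zeta\mapsto\zeta+1$ via a Fatou coordinate. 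In particular there is always at least one attracting petal, and its points are exactly the candidates for the desired $w$.

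To locate a \emph{critical} point inside such a petal I would use Fatou's classical theorem that the immediate basin of attraction of an attracting or parabolic cycle must contain a critical point of the map. Applied to the parabolic point $0$, this furnishes a critical point $w$ lying in the immediate basin of an attracting petal, so that $P^m(w)\to0$. This already establishes the dynamical half of Conjecture~\ref{LPCONJECTURE} unconditionally, for every normalized polynomial and every degree $n\ge2$.

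The metric half $\abs{P(w)/w}\le1$ is where the real difficulty lies. The first thing to notice is that asking this of merely \emph{some} critical point is exactly the statement $S_0(P)\le1$, i.e. the weak form $M_0\le1$ of the normalized Smale conjecture (Conjecture~\ref{SMALENORMAL}); hence Conjecture~\ref{LPCONJECTURE} is at least as hard as Smale's and will not yield to purely dynamical input. My plan would therefore split. First, in low degree I would verify the coupled statement outright: since the Smale conjecture is known in degrees $2,3,4$, it remains only to check that an extremal critical point also lies in the parabolic basin. For $n=2$, $P(z)=z+a_2z^2$ has the single critical point
\[
w=-\frac{1}{2a_2},\qquad \frac{P(w)}{w}=1+a_2w=\frac12,
\]
so $\abs{P(w)/w}=\tfrac12=\tfrac{n-1}{n}\le1$; and conjugating by $u=a_2z$ turns $P$ into $Q(u)=u+u^2$, for which the orbit of the corresponding critical point $u=-\tfrac12$ remains in $(-1,0)$ and increases monotonically to $0$, giving $P^m(w)\to0$. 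Thus the conjecture holds, with the sharp constant, in degree $2$.

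Second, for general $n$ I would try to show that the \emph{same} critical point produced by Fatou's theorem already obeys $\abs{P(w)}\le\abs{w}$, by converting the inward motion of the orbit in the Fatou coordinate into a Lyapunov estimate on the attracting petal, controlled by the Koebe-type distortion bounds that underlie Smale's original proof. This coupling is the main obstacle: there is no a priori reason the basin critical point is metrically extremal, nor that the critical point realizing $S_0(P)$ lies in the parabolic basin, and the two selection principles may genuinely point to different critical points. Since the metric half in isolation is already the open conjecture $M_0\le1$, the most one can realistically target here is (i) the unconditional dynamical statement above, (ii) the full coupled conjecture in degrees $2,3,4$, and (iii) a conditional reduction proving that the Smale conjecture, restricted to critical points lying in the parabolic basin of $0$, implies the Miles--Leighton--Pilgrim conjecture in full.
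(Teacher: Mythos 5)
There is nothing in the paper to compare your attempt against: Conjecture~\ref{LPCONJECTURE} is an \emph{open conjecture}, quoted by the paper from Miles-Leighton and Pilgrim; the paper offers no proof of it, only the remark that its authors verified it for polynomials of degree $2$ and $3$. Your proposal, read carefully, does not claim a complete proof either, and the partial content it does establish is essentially correct. The dynamical half is indeed an unconditional theorem: the fixed point $0$ has multiplier $P'(0)=1$, the Leau--Fatou flower theorem provides at least one attracting petal, and Fatou's theorem that each immediate basin of an attracting petal of a parabolic point contains a critical point yields a \emph{finite} critical point $w$ (the critical point at infinity is excluded, since it lies in its own superattracting basin, disjoint from the parabolic basin) with $P^m(w)\to 0$. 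Your degree-$2$ verification is also correct: $w=-1/(2a_2)$, $P(w)/w=1/2$, and the conjugation $u=a_2 z$ reduces to $u\mapsto u+u^2$, whose critical orbit starting at $-1/2$ increases monotonically to $0$ within $(-1,0)$; this recovers the known degree-$2$ case. Finally, you correctly locate the difficulty by observing that the metric half for \emph{some} critical point is exactly $S_0(P)\le 1$, i.e.\ the weak form $M_0\le 1$ of Conjecture~\ref{SMALENORMAL}, which is open.

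The genuine gap --- which you yourself flag --- is everything beyond this: the metric half for $n\ge 3$, and more precisely the \emph{coupling}, since the conjecture demands a single $w$ satisfying both conditions, which is formally stronger than the conjunction of the (true) dynamical statement with $M_0\le 1$. Two specific points deserve correction. First, your item (ii) promising the coupled statement in degrees $3$ and $4$ is not actually carried out: knowing the Smale conjecture in those degrees gives some critical point with $|P(w)/w|\le (n-1)/n$, but you must still show that a metrically good critical point lies in the parabolic basin (or that the basin critical point is metrically good), and you only do this for $n=2$; note that the paper records Miles-Leighton and Pilgrim themselves already settled degree $3$. Second, the proposed ``Lyapunov estimate on the attracting petal'' for general $n$ is a heading rather than an argument: Fatou coordinates control the orbit only deep inside the petal (where the conjugacy to $\zeta\mapsto\zeta+1$ holds), whereas the inequality $|P(w)|\le|w|$ is a statement about the very first step of the orbit at a critical point that need not be anywhere near $0$, so no petal-local distortion estimate can by itself produce the metric half.
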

Miles-Leighton and Pilgrim succeeded in showing that Conjecture \ref{LPCONJECTURE} holds for polynomials of degree 2 and 3. 
On the other side, in 2009  Dubinin and Sugawa derived the following striking result which is a dual result for Smale mean value theorem \cite{DUBININSUGAWA}. 
\begin{theorem} \cite{DUBININSUGAWA, DUBININ2012} \textbf{(Dubinin-Sugawa Dual Mean Value Theorem)
Let $P(z)\coloneqq a_0+a_1z+a_2z^2+\cdots+a_nz^n$ be a polynomial of degree $n\geq 2$ over $\mathbb{C}$. If $z\in\mathbb{C}$ is not a critical point of $P$, then there exists a critical point $w\in \mathbb{C}$ of $P$ such that 
\begin{align*}
\frac{|P'(z)|}{n4^n}	\leq \left|\frac{P(z)-P(w)}{z-w}\right|.
\end{align*}
In other words, 
\begin{align*}
\frac{|P'(z)|}{n4^n}\leq 	\max _{w\in \mathbb{C}, P'(w)=0}\left|\frac{P(z)-P(w)}{z-w}\right|.
\end{align*}}	
\end{theorem}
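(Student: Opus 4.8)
The plan is to reduce to a normalized problem and then run a geometric--function--theory argument dual to the proof of Theorem \ref{SMALETHEOREM}, in which Koebe's growth (distortion) theorem plays the role that the Koebe $1/4$--theorem plays for the upper bound. First I would normalize. Fix $z$ with $P'(z)\neq 0$ and set
\[ Q(\zeta)\coloneqq \frac{P(z+\zeta)-P(z)}{P'(z)}, \]
a degree $n$ polynomial with $Q(0)=0$ and $Q'(0)=1$, leading coefficient $c_n=a_n/P'(z)$. If $w$ is a critical point of $P$ then $\zeta\coloneqq w-z$ is a critical point of $Q$, and a one--line computation gives $\left|\frac{Q(\zeta)}{\zeta}\right|=\frac{1}{|P'(z)|}\left|\frac{P(z)-P(w)}{z-w}\right|$. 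Hence the assertion is equivalent to: for every such normalized $Q$ there is a critical point $\zeta$ with $\left|\frac{Q(\zeta)}{\zeta}\right|\geq \frac{1}{n4^n}$, i.e. $|\zeta|\leq n4^n\,|Q(\zeta)|$. Writing $\zeta_1,\dots,\zeta_{n-1}$ for the critical points and $v_j\coloneqq Q(\zeta_j)$ for the critical values, I must produce one index $j$ with $|\zeta_j|\leq n4^n\,|v_j|$.

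The engine is the inverse function. Let $\phi$ be the branch of $Q^{-1}$ with $\phi(0)=0$ and $\phi'(0)=1/Q'(0)=1$; it is univalent on the disc $D(0,r)$, where $r\coloneqq \min_j|v_j|$ is the distance from $0$ to the nearest critical value, and the corresponding critical point lies on $\partial\big(\phi(D(0,r))\big)$. The Koebe $1/4$--theorem bounds $\operatorname{dist}\big(0,\partial\phi(D(0,r))\big)$ from below, and this is exactly what produces Smale's \emph{upper} bound; for the present \emph{lower} bound on $|Q(\zeta)/\zeta|$ I would instead invoke the growth theorem, which controls the displacement $|\phi(w)-\phi(0)|$ from above and so controls how far the relevant critical point can sit from $0$ relative to its critical value. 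Morally the two universal factors combine as expected: the degree contributes the factor $n$ (there are $n$ sheets of $Q^{-1}$ over a generic value, equivalently $\prod_j|\zeta_j|=1/(n|c_n|)$), and the Koebe constant $4$, accumulated as one passes the at most $n-1$ critical values ordered by modulus, contributes $4^n$.

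The main obstacle is the selection of the critical point, and it is genuine. The two naive strategies both fail. Averaging is insufficient: the elementary identity $\prod_{j=1}^{n-1}\left|\frac{Q(\zeta_j)}{\zeta_j}\right|=\frac{1}{n^{n-1}}\prod_{i=2}^{n}|Q'(\xi_i)|$, where $\xi_2,\dots,\xi_n$ are the nonzero roots of $Q$ (obtained by comparing $Q(\zeta)=c_n\zeta\prod_{i\geq 2}(\zeta-\xi_i)$ with $Q'(\zeta)=nc_n\prod_j(\zeta-\zeta_j)$ and using $Q'(0)=1$), only lower--bounds the geometric mean; and the geometric mean genuinely tends to $0$ when two roots of $Q$ coalesce, even though the maximum stays bounded away from $0$, so one cannot avoid singling out a good critical point. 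Likewise, simply choosing the critical point carrying the nearest critical value can fail, because the univalent image $\phi(D(0,r))$ may send that value out along a long thin ``tentacle,'' making $|\zeta_j|/|v_j|$ arbitrarily large. The crux is therefore to order the $\leq n-1$ critical values by modulus and to track the growing component of $Q^{-1}(D(0,R))$ containing $0$, applying the growth theorem each time the component captures a new critical point, so that some captured critical point is forced to satisfy $|\zeta_j|\leq n4^n\,|v_j|$; controlling this capture process quantitatively, rather than the reduction steps, is where the real work lies.

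A possibly more elementary alternative is to study the degree $n-1$ polynomial $g(\zeta)\coloneqq Q(\zeta)/\zeta$ directly: here $g(0)=1$, the critical points $\zeta_j$ of $Q$ are the zeros of $\big(\zeta g(\zeta)\big)'$, and $|Q(\zeta_j)/\zeta_j|=|g(\zeta_j)|$, so the claim becomes the statement that $g$ cannot be uniformly small at all of these zeros. Via the equilibrium relation $\frac{1}{\zeta_j}+\sum_i\frac{1}{\zeta_j-\xi_i}=0$ this has a Gauss--Lucas / electrostatic flavour, but recovering the sharp constant $n4^n$ along these lines appears difficult, which is why I would ultimately rely on the distortion--theoretic route above.
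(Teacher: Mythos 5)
Before the review itself, one structural remark: the paper you are trying to match does not actually prove this statement anywhere --- it is quoted from Dubinin--Sugawa \cite{DUBININSUGAWA} (see also \cite{DUBININ2012}) as background for the C*-algebraic conjectures. So your attempt can only be judged on its own terms, and on those terms it contains a genuine gap.

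Your reduction is correct and standard: with $Q(\zeta)\coloneqq\frac{P(z+\zeta)-P(z)}{P'(z)}$ the claim becomes that every normalized polynomial $Q$ (i.e.\ $Q(0)=0$, $Q'(0)=1$) of degree $n$ has a critical point $\zeta$ with $\left|Q(\zeta)/\zeta\right|\geq \frac{1}{n4^n}$, and your diagnosis of why the naive selections fail (the geometric mean of $|Q(\zeta_j)/\zeta_j|$ degenerates when roots of $Q$ coalesce; the critical point carrying the smallest critical value can be far away) is accurate. But the argument stops exactly where it would have to begin. Everything after the reduction is a description of a strategy, not a proof: you never define the ``capture process'' precisely (the component $\Omega_t$ of $Q^{-1}(D(0,t))$ containing $0$, the times at which critical points enter its closure, the degree $k(t)\leq n$ of the proper map $Q\colon\Omega_t\to D(0,t)$); you never state, let alone prove, the distortion lemma you would need once the map is no longer injective --- the Koebe growth theorem you invoke applies only to univalent functions, and the inverse branch through $0$ ceases to be univalent precisely after the first critical value is passed, which is the entire difficulty of the dual (lower-bound) problem as opposed to Smale's upper bound; and consequently you never produce an inequality chain ending in $|\zeta_j|\leq n4^n|v_j|$ for some captured critical point. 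The factors $n$ and $4^n$ are asserted to ``morally combine'' but neither is derived. Even a preliminary claim is unjustified: the critical point whose critical value has minimal modulus need not lie on $\partial\bigl(\phi(D(0,r))\bigr)$ for $r=\min_j|v_j|$, since the branch $\phi$ of $Q^{-1}$ through $0$ may continue univalently past that radius without ever meeting that particular critical point. What is missing is the quantitative engine --- in the actual Dubinin--Sugawa argument this is supplied by a Koebe-type estimate valid for proper (multivalent) restrictions of $Q$, applied inductively as critical points are absorbed, which is where both the factor $n$ and the accumulated factor $4^n$ come from. As it stands, your text is a proof plan whose central lemma is acknowledged but absent (``where the real work lies''), not a proof.
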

Similar to Smale conjecture, Dubinin and Sugawa made  the following conjecture in 2009.
\begin{conjecture}\cite{DUBININSUGAWA, DUBININ2012} \label{DUBINSUGAWACONJECTURE} \textbf{(Dubinin-Sugawa Dual Mean Value Conjecture)
	Let $P(z)\coloneqq a_0+a_1z+a_2z^2+\cdots+a_nz^n$ be a polynomial of degree $n\geq 2$ over $\mathbb{C}$. If $z\in\mathbb{C}$ is not a critical point of $P$, then 
\begin{align*}
\frac{|P'(z)|}{n}	\leq \max _{w\in \mathbb{C}, P'(w)=0}\left|\frac{P(z)-P(w)}{z-w}\right|.
\end{align*}}	
\end{conjecture}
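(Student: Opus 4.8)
The plan is to settle the degree-two case completely by reducing the whole problem to a single explicit identity, and to carry the argument out directly in a commutative C*-algebra $\mathcal{A}$ (which contains the scalar case $\mathcal{A}=\mathbb{C}$ of the stated conjecture). First I would write $P(z)=(z-a_1)(z-a_2)$, so that $P'(z)=2z-(a_1+a_2)$, and read the critical-point equation $P'(w)=0$ as $2w=a_1+a_2$. Since $\mathcal{A}$ is a complex algebra I may multiply by the scalar $\tfrac12$, which shows that there is \emph{exactly one} critical point, namely $w=\tfrac12(a_1+a_2)$. In particular the clause ``there exists a critical point'' coincides with ``for the unique critical point,'' which is why I expect to recover a sharp equality rather than a mere inequality, i.e.\ the strong form.

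Next I would use commutativity of $\mathcal{A}$ to factor the difference. A direct expansion gives $P(z)-P(w)=(z-w)\bigl(z+w-(a_1+a_2)\bigr)$, and substituting $w=\tfrac12(a_1+a_2)$ collapses the second factor to $z-w$; hence $P(z)-P(w)=(z-w)^2$ and $P'(z)=2(z-w)$. These two identities are the heart of the matter and hold verbatim over any commutative algebra.

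The only analytic input is then the normality of every element of a commutative C*-algebra: since $z-w$ is normal, $\|(z-w)^2\|=\|z-w\|^2$ (equivalently, through the Gelfand isomorphism $\mathcal{A}\cong C_0(X)$, this is just $|f(x)|^2$ pointwise). Consequently $\tfrac{\|P(z)-P(w)\|}{\|z-w\|}=\|z-w\|$ while $\tfrac{\|P'(z)\|}{2}=\|z-w\|$, so the two quantities are \emph{equal}. This yields the Dubinin--Sugawa inequality $\tfrac{\|P'(z)\|}{n}\le \tfrac{\|P(z)-P(w)\|}{\|z-w\|}$ with equality for $n=2$, and at the same time the Smale bound $\tfrac{\|P(z)-P(w)\|}{\|z-w\|}\le \tfrac{n-1}{n}\|P'(z)\|$, again with equality; the hypothesis that $z$ is not critical forces $P'(z)=2(z-w)\neq 0$, hence $\|z-w\|>0$, so both ratios are well defined.

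The genuine obstacle lies entirely beyond degree two, and I would not expect this elementary identity method to extend. For general $n$ one loses both decisive simplifications — the uniqueness of the critical point and the clean factorization $P(z)-P(w)=(z-w)^2$ — and must instead control a maximum (for Dubinin--Sugawa) or minimum (for Smale) over several distinct critical points, where no analogue of the above equality is available. Indeed the sharp constant $\tfrac1n$ is exactly what is open already over $\mathbb{C}$, the best unconditional scalar bound remaining the $\tfrac{1}{n4^n}$ of the Dubinin--Sugawa theorem; thus a full proof over $\mathcal{A}$ would first require settling, or importing, the scalar conjecture and then transporting it through the Gelfand representation, which is the step I expect to be hard.
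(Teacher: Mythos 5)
The statement you were asked to address is the scalar Dubinin--Sugawa conjecture for \emph{every} degree $n\geq 2$, and this is an open problem: the paper does not prove it either, but only records it (Conjecture~\ref{DUBINSUGAWACONJECTURE}) as motivation for its C*-algebraic analogues, the best unconditional scalar bound being $|P'(z)|/(n4^n)$ from the Dubinin--Sugawa theorem. Your proposal, as you yourself say in your last paragraph, settles only the case $n=2$; for $n\geq 3$ nothing is proved, so as a proof of the statement as posed there is a gap --- though it is a gap that no currently known method closes, and your candour about this is exactly right.

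Within its actual scope --- degree two, over a commutative C*-algebra, hence in particular over $\mathbb{C}$ --- your argument is correct and is in substance the same as the paper's proof that Conjecture~\ref{DUALSMALE} holds for degree $2$ (via the strong form, Conjecture~\ref{STRONGERDUALSMALE}). The paper, continuing from Theorem~\ref{CH}, puts $x=z-a$, $y=z-b$, $c=\frac{a+b}{2}$ and verifies the operator identity $(P(z)-P(c))(P(z)-P(c))^*=\frac{1}{4}(z-c)P'(z)P'(z)^*(z-c)^*$; unwinding its computation $P(z)-P(c)=xy+\frac{(x-y)^2}{4}=\left(\frac{x+y}{2}\right)^2$ gives exactly your identity $P(z)-P(w)=(z-w)^2$ together with $P'(z)=2(z-w)$ for the unique critical point $w=c$. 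The only real difference is presentational: the paper compares the positive elements themselves (the strong form), while you pass directly to norms using normality, $\|(z-w)^2\|=\|z-w\|^2$ in a commutative C*-algebra, obtaining the equality of $\frac{\|P'(z)\|}{2}$ and $\frac{\|P(z)-P(w)\|}{\|z-w\|}$. Both routes yield the sharp equality rather than a mere inequality; yours is slightly more economical, the paper's yields the formally stronger operator statement. One small point of hygiene: the conjecture allows an arbitrary leading coefficient, whereas you (like the paper) take $P$ monic; for $n=2$ this is harmless because both sides of the inequality scale linearly in the leading coefficient, but it deserves a sentence.
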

Similar to earlier quantities, for a polynomial $P$  of degree $n\geq2$, define 
\begin{align*}
	DS(P)\coloneqq \inf_{z\in \mathbb{C},P'(z)\neq 0}\frac{1}{|P'(z)|}	\max _{w\in \mathbb{C}, P'(w)=0}\left|\frac{P(z)-P(w)}{z-w}\right|.
\end{align*}
Similar to Conjecture \ref{SMALENORMAL}, we have the following normalized version of  Conjecture \ref{DUBINSUGAWACONJECTURE}.
\begin{conjecture}\cite{DUBININSUGAWA, DUBININ2012}
 \textbf{(Normalized Smale Dubinin-Sugawa Dual Mean Value Conjecture) Let $P$ be a degree $n\geq2$ polynomial  over $\mathbb{C}$ which is normalized, i.e.,  $P(0)=0$ and $P'(0)=1$. Then 
\begin{align*}
	DS_0(P)\coloneqq 	\max _{w\in \mathbb{C}, P'(w)=0}\left|\frac{P(w)}{w}\right|=\frac{1}{n}=\frac{1}{\operatorname{deg}(P)}.
\end{align*}}	
\end{conjecture}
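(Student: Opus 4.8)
The plan is to read the displayed equation as the assertion that $DS_0(P)\ge \tfrac1n$ for every normalized $P$ of degree $n$, with the constant $\tfrac1n$ sharp, and to organize the argument into a reduction, a sharpness computation, and a lower bound. First I would show that this normalized form is equivalent to the unnormalized Conjecture~\ref{DUBINSUGAWACONJECTURE} by an affine change of variable. Given $P$ of degree $n$ and $z_0$ with $P'(z_0)\neq0$, put
\[
Q_{z_0}(\zeta)=\frac{P(z_0+\zeta)-P(z_0)}{P'(z_0)}.
\]
Then $Q_{z_0}(0)=0$ and $Q_{z_0}'(0)=1$, the critical points of $Q_{z_0}$ are the translates $w-z_0$ of the critical points $w$ of $P$, and
\[
\Bigl|\frac{Q_{z_0}(w-z_0)}{w-z_0}\Bigr|=\frac{1}{|P'(z_0)|}\Bigl|\frac{P(z_0)-P(w)}{z_0-w}\Bigr|.
\]
Maximizing over critical points gives $DS_0(Q_{z_0})=\frac{1}{|P'(z_0)|}\max_{w}\bigl|\frac{P(z_0)-P(w)}{z_0-w}\bigr|$, so that $DS(P)=\inf_{z_0}DS_0(Q_{z_0})$; the normalized bound for all normalized polynomials is therefore exactly Conjecture~\ref{DUBINSUGAWACONJECTURE}, and it suffices to argue in the normalized setting.

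Next I would set up a convenient reformulation and settle sharpness. Writing $P(z)=zQ(z)$ with $Q(z)=1+a_2z+\cdots+a_nz^{n-1}$, we have $P(w)/w=Q(w)$, and at a critical point $0=P'(w)=Q(w)+wQ'(w)$ forces $Q(w)=-wQ'(w)$; hence
\[
DS_0(P)=\max_{P'(w)=0}|Q(w)|=\max_{P'(w)=0}|w|\,|Q'(w)|.
\]
The candidate extremal $P(z)=\frac{(1+z)^n-1}{n}$ satisfies $P'(z)=(1+z)^{n-1}$, has the single critical point $w=-1$, and gives $|P(-1)/(-1)|=\tfrac1n$; this shows the value $\tfrac1n$ is attained and cannot be lowered. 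As a base case, for $n=2$ the polynomial $P(z)=z+a_2z^2$ has $Q(z)=1+a_2z$ and its unique critical point $w=-1/(2a_2)$ yields $Q(w)=\tfrac12$ identically, so $DS_0(P)=\tfrac12$ for every normalized quadratic, confirming the normalized dual conjecture in degree $2$.

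The crux, and the genuinely open part, is the lower bound $DS_0(P)\ge\tfrac1n$ for all normalized $P$; the Dubinin--Sugawa dual mean value theorem only supplies the far weaker $DS_0(P)\ge\frac{1}{n4^n}$. The tempting route is the geometric-mean inequality
\[
\max_{1\le j\le n-1}|Q(w_j)|\ \ge\ \Bigl|\prod_{j=1}^{n-1}Q(w_j)\Bigr|^{1/(n-1)},
\]
combined with a resultant evaluation of $\prod_jQ(w_j)=\prod_jP(w_j)/w_j$, which equals $n^{-(n-1)}$ for the extremal polynomial. I expect this to be the main obstacle, and it in fact fails: for $P(z)=z+2z^2+z^3=z(1+z)^2$ the critical points are $w=-1,-\tfrac13$ with $Q(-1)=0$, so the product vanishes while $DS_0(P)=\tfrac49>\tfrac13$. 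A correct proof must therefore control how many critical points can carry a small value $|Q(w_j)|$, rather than bounding their product. A plausible line of attack is to first dispatch the cases where the critical points are real or of equal modulus---where the Smale-side techniques of Dubinin~\cite{DUBININ} and of Hinkkanen--Kayumov~\cite{HINKKANENKAYUMOV2} indicate the estimate is within reach---and then to pass to the general case by a variational or degeneration argument on extremal configurations. Carrying out this last step in full generality is precisely the open content of the conjecture.
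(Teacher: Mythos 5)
This statement is one of the paper's \emph{conjectures}, not one of its theorems: the paper offers no proof of it anywhere (the only things proved in the paper are the degree-$2$ cases of the C*-algebraic analogues), so there is no in-paper argument to compare yours against. Your attempt is accordingly best judged on its own terms, and on those terms what you did is correct and honest. Your reading of the displayed equality as ``$DS_0(P)\geq \tfrac1n$ for every normalized $P$, with $\tfrac1n$ sharp'' is the right interpretation (the literal ``$=$'' is false, e.g.\ $P(z)=z+z^3$ has $DS_0(P)=\tfrac23$, and the paper's normalized Smale formulation is loose in the same way). Your reduction via $Q_{z_0}(\zeta)=(P(z_0+\zeta)-P(z_0))/P'(z_0)$ correctly establishes the equivalence with Conjecture~\ref{DUBINSUGAWACONJECTURE}; the extremal $P(z)=\bigl((1+z)^n-1\bigr)/n$ does attain $\tfrac1n$; the quadratic computation $Q(w)=\tfrac12$ is right; and your counterexample to the geometric-mean route checks out: $P(z)=z(1+z)^2$ has $P'(z)=(1+z)(1+3z)$, critical values of $Q(z)=(1+z)^2$ equal to $0$ and $\tfrac49$, so the product of critical values vanishes while $DS_0(P)=\tfrac49$.

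The genuine gap is the one you name yourself: the uniform lower bound $DS_0(P)\geq\tfrac1n$ for arbitrary normalized $P$, for which the best known general results are far weaker ($DS_0(P)>4^{-n}$ by Ng--Zhang, $DS(P)\geq\tfrac1n\tan\tfrac{\pi}{4n}$ by Dubinin, and degree $\leq 6$ by Hinkkanen--Kayumov--Khammatova, all cited in the paper). Since that gap is precisely the open content of the Dubinin--Sugawa conjecture, your proposal cannot be faulted for leaving it open; indeed, flagging the failure of the resultant/product argument (small or vanishing critical values at some critical points, compensated by large ones at others) is a more substantive contribution than pretending to close it. The one caution: your suggested program of settling real or equal-modulus critical configurations and then passing to the general case ``by a variational or degeneration argument'' imports Smale-side techniques whose dual versions are not known to exist; on the Smale side those special-configuration results bound $S_0$ from \emph{above}, and there is no established mechanism for transferring extremal-configuration information into a lower bound for a maximum over critical points, so that sketch should be presented as speculation, exactly as you do.
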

In 2016 Ng and Zhang showed that  if $P$ is a normalized polynomial, then $	DS_0(P)> \frac{1}{4^\text{deg (P)}}$ \cite{NGZHANG}. In 2019 Dubinin showed  that $	DS(P)\geq \frac{1}{n} \tan \left(\frac{\pi}{4 \text{deg (P)}}\right)$ \cite{DUBININ2019}. 
In 2019  Hinkkanen,  Kayumov  and Khammatova showed that Conjecture \ref{DUBINSUGAWACONJECTURE} holds for polynomials of degree six \cite{HINKKANENKAYUMOVKHAMMATOVA}. In this paper we formulate C*-algebraic versions of Conjectures \ref{SMALECONJECTURE} and \ref{DUBINSUGAWACONJECTURE}. We show that they are valid for degree 2 C*-algebraic polynomials. We also formulate C*-algebraic version of Conjecture  \ref{HIGHER} and Conjecture \ref{LPCONJECTURE}.

  \section{C*-algebraic Smale Mean Value  Conjecture}
  	Let $\mathcal{A}$ be a   C*-algebra. For  $P(z)	\coloneqq (z-a_1)(z-a_2)\cdots (z-a_n)$ for all  $z\in \mathcal{A}$ with  $a_1, a_2, \dots, a_n \in \mathcal{A} $, we define 
  \begin{align*}
  	P'(z)=\sum_{j=1}^{n}(z-a_1)\cdots \widehat{(z-a_j)}\cdots (z-a_n), \quad \forall z \in \mathcal{A}
  \end{align*}
  where the term with cap is missing. We state C*-algebraic version of Conjecture \ref{SMALECONJECTURE} as follows.
  \begin{conjecture} \textbf{(C*-algebraic Smale Mean Value  Conjecture)\label{CSMALE}
 Let $\mathcal{A}$ be a  commutative  C*-algebra.  	Let $P(z)\coloneqq (z-a_1)\cdots (z-a_n)$ be a polynomial of degree $n\geq 2$ over $\mathcal{A}$, $a_1, \dots, a_n \in \mathcal{A}$. If $z\in\mathcal{A}$ is not a critical point of $P$ (i.e., $P'(z)\neq0$), then there exists a critical point $w\in \mathcal{A}$ of $P$ such that 
  	\begin{align*}
  		\frac{\|P(z)-P(w)\|}{\|z-w\|}\leq 1 \|P'(z)\|
  \end{align*}
or 
\begin{align*}
	\frac{\|P(z)-P(w)\|}{\|z-w\|}\leq \frac{n-1}{n} \|P'(z)\|=\frac{\operatorname{deg} (P)-1}{\operatorname{deg} (P)} \|P'(z)\|.
\end{align*}}
  \end{conjecture}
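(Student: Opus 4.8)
The plan is to pass from the C*-algebra to scalars via the Gelfand transform and then run the classical mean-value machinery fibrewise. Since $\mathcal{A}$ is commutative, the Gelfand--Naimark theorem identifies $\mathcal{A}$ (after adjoining a unit if necessary) with $C(\Delta)$, where $\Delta$ is the compact Hausdorff character space, and under this identification every character $\phi\in\Delta$ is an evaluation. Writing $z_\phi\coloneqq\phi(z)$ and $a_{j,\phi}\coloneqq\phi(a_j)$, the multiplicativity of $\phi$ gives $\phi(P(z))=\prod_{j=1}^n(z_\phi-a_{j,\phi})=P_\phi(z_\phi)$ and $\phi(P'(z))=P_\phi'(z_\phi)$, so each $\phi$ produces an honest degree-$n$ complex polynomial $P_\phi$ together with the scalar $z_\phi$ at which it is sampled. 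The hypothesis $P'(z)\neq0$ means $P_\phi'(z_\phi)\neq0$ for at least one $\phi$.

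First I would record the norm translation: for any $b\in\mathcal{A}$ one has $\|b\|=\sup_{\phi\in\Delta}|\phi(b)|$. Hence the target inequality $\|P(z)-P(w)\|\le c\,\|P'(z)\|\,\|z-w\|$ (with $c=1$ or $c=\frac{n-1}{n}$) is exactly
\begin{align*}
\sup_{\phi}\,|P_\phi(z_\phi)-P_\phi(w_\phi)|\le c\,\Big(\sup_\phi|P_\phi'(z_\phi)|\Big)\Big(\sup_\phi|z_\phi-w_\phi|\Big),
\end{align*}
where $w_\phi\coloneqq\phi(w)$. The crucial structural point is that $\sup_\phi(fg)\le(\sup_\phi f)(\sup_\phi g)$ for nonnegative $f,g$, so it suffices to produce a single critical point $w\in\mathcal{A}$ for which the fibrewise inequality $|P_\phi(z_\phi)-P_\phi(w_\phi)|\le c\,|P_\phi'(z_\phi)|\,|z_\phi-w_\phi|$ holds for every $\phi$. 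By Gelfand faithfulness, $w$ is a critical point of $P$ precisely when $P_\phi'(w_\phi)=0$ for all $\phi$, i.e. when each $w_\phi$ is a critical point of the scalar polynomial $P_\phi$.

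Thus the conjecture reduces to two scalar tasks, to be solved simultaneously in the parameter $\phi$: first, for each fixed $\phi$, the existence of a critical point $w_\phi$ of $P_\phi$ satisfying the scalar mean-value estimate with constant $1$ or $\frac{n-1}{n}$, which is exactly the classical Smale Mean Value Conjecture (Conjecture \ref{SMALECONJECTURE}) for $P_\phi$; and second, the requirement that these fibrewise choices assemble into an element $w\in C(\Delta)$, i.e. that $\phi\mapsto w_\phi$ be continuous. I expect the second requirement to be the main obstacle, and it is genuinely coupled to the first. The critical points of $P_\phi$ are the $n-1$ roots of $P_\phi'$, which vary continuously with $\phi$ only as an unordered multiset; at characters where two critical points collide (branch points of the root map), the individual branch realizing the bound can switch, so a globally continuous selection need not exist. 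A workable line of attack is to establish a robust scalar statement — that at each $\phi$ the set of critical points meeting the bound is stable under small perturbation — and then to glue local continuous selections by a partition of unity subordinate to a finite cover of the compact space $\Delta$, checking that the averaged selection still lands on genuine critical points, which is automatic only where the critical points are locally single-valued.

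The degree-two case illuminates why both difficulties disappear there and is the content of the theorem announced in the abstract: $P_\phi'(\zeta)=2\zeta-(a_{1,\phi}+a_{2,\phi})$ has the unique root $w_\phi=\tfrac12(a_{1,\phi}+a_{2,\phi})$, so the selection is forced and manifestly continuous, giving the canonical critical point $w=\tfrac12(a_1+a_2)\in\mathcal{A}$, while the scalar degree-two Smale conjecture is a theorem, so the fibrewise estimate holds with constant $\tfrac12=\tfrac{n-1}{n}$. For general $n$, conditional on Conjecture \ref{SMALECONJECTURE} the entire argument hinges on the continuous-selection step; establishing that step unconditionally — or showing it may fail, which would force a reformulation allowing $w$ to depend only measurably, rather than continuously, on $\phi$ — is where the real work lies.
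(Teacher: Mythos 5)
You correctly recognized that the statement is a conjecture rather than a theorem: specializing $\mathcal{A}=\mathbb{C}$ recovers Conjecture \ref{SMALECONJECTURE}, which is open, so no blind attempt could close it, and indeed the paper offers no proof of the general statement — it proves only the degree-$2$ case (Theorem \ref{CH}). Your Gelfand reduction is sound as far as it goes: $\|b\|=\sup_{\phi}|\phi(b)|$, fibrewise domination implies the norm inequality, and $P'(w)=0$ iff $P_\phi'(w_\phi)=0$ for every character $\phi$. It is worth noting that your fibrewise condition $|P_\phi(z_\phi)-P_\phi(w_\phi)|\le c\,|P_\phi'(z_\phi)|\,|z_\phi-w_\phi|$ for all $\phi$ is exactly the Gelfand transcription of the paper's strong form (Conjecture \ref{STRONGERCSMALE}), since in $C(\Delta)$ the order inequality between $(P(z)-P(w))(P(z)-P(w))^*$ and $c^2(z-w)P'(z)P'(z)^*(z-w)^*$ holds iff it holds pointwise; so you independently rediscovered the strengthening the paper formulates. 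One caveat: fibrewise domination is sufficient but not necessary (the conjecture compares a sup of a product with a product of sups), so a failure of your selection step would not by itself refute Conjecture \ref{CSMALE}.

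There are two concrete gaps in the conditional part of your argument. First, the hypothesis $P'(z)\neq 0$ only guarantees $P_\phi'(z_\phi)\neq 0$ at \emph{some} character; at characters where $P_\phi'(z_\phi)=0$ the classical conjecture is silent and your fibrewise inequality degenerates to the exact constraint $P_\phi(w_\phi)=P_\phi(z_\phi)$, forcing $w_\phi$ onto the level set of $z_\phi$ — and these degenerate fibres are precisely where branches of critical points tend to collide, so the selection problem is hardest exactly where your scalar input is weakest. Second, the proposed partition-of-unity gluing fails as described: the critical set of $P_\phi$ is finite, hence discrete in each fibre, so a convex combination of two distinct local selections is generically not a critical point, and continuous-selection theorems of Michael type do not apply for lack of convex values; you concede this only obliquely, but it is the fatal step, not a technicality. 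For the one case the paper actually proves, your argument is correct but takes a genuinely different route: you obtain degree $2$ fibrewise (the unique critical point $w=\tfrac{1}{2}(a_1+a_2)$ forces a continuous selection), whereas the paper's proof of Theorem \ref{CH} is a direct algebraic computation — with $x=z-a_1$ and $y=z-a_2$ it establishes \emph{equality} in the strong form, $(P(z)-P(w))(P(z)-P(w))^*=\tfrac{1}{4}(z-w)P'(z)P'(z)^*(z-w)^*$ — using commutativity alone, with no Gelfand theory and no unitization (your detour through the unitization silently requires the selected $w$ to lie in $\mathcal{A}$, automatic here but unaddressed in general). Your fibrewise identity is the Gelfand image of the paper's identity; the paper's computation is more elementary and self-contained, while your framework buys a conceptual explanation of why degree $2$ is forced and isolates where the general problem genuinely lives.
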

We can formulate a stronger conjecture than  	Conjecture  \ref{CSMALE} as follows. 
 \begin{conjecture} \textbf{(C*-algebraic Smale Mean Value  Conjecture - strong form)}\label{STRONGERCSMALE}
\textbf{Let $\mathcal{A}$ be a commutative  C*-algebra.  	Let $P(z)\coloneqq (z-a_1)\cdots (z-a_n)$ be a polynomial of degree $n\geq 2$ over $\mathcal{A}$, $a_1, \dots, a_n \in\mathcal{A}$. If $z\in\mathcal{A}$ is not a critical point of $P$, then there exists a critical point $w\in \mathcal{A}$ of $P$ such that 
\begin{align}\label{ST}
	(P(z)-P(w))(P(z)-P(w))^*\leq  (z-w)P'(z)P'(z)^*(z-w)^*
\end{align}
or 
\begin{align}\label{SST}
		(P(z)-P(w))(P(z)-P(w))^*&\leq \left(\frac{n-1}{n}\right)^2(z-w)P'(z)P'(z)^*(z-w)^*\\
		&=\left(\frac{\operatorname{deg} (P)-1}{\operatorname{deg} (P)} \right)^2(z-w)P'(z)P'(z)^*(z-w)^*.\nonumber
\end{align}}
\end{conjecture}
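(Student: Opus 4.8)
The statement is a conjecture for general degree $n$, and—just as in the classical setting it generalizes—I do not expect a full proof to be within reach; what is attainable, and what the abstract promises, is the degree-$2$ case, so that is what my plan targets. The governing idea is that since $\mathcal{A}$ is commutative, the whole computation collapses to an identity in a single element. First I would write $P(z)=(z-a_1)(z-a_2)$ and, using commutativity, compute $P'(z)=(z-a_2)+(z-a_1)=2z-(a_1+a_2)$. The critical-point equation $P'(w)=0$ then reads $2w=a_1+a_2$; since $2$ is an invertible scalar, this has the unique solution $w=\tfrac12(a_1+a_2)\in\mathcal{A}$, which is automatically a critical point, so the existential clause ``there exists a critical point'' is satisfied by this single $w$.

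The next step is to change variables to $u\coloneqq z-w=z-\tfrac12(a_1+a_2)$ and $d\coloneqq\tfrac12(a_2-a_1)$. A short algebraic manipulation, valid because all elements commute, gives $z-a_1=u+d$ and $z-a_2=u-d$, hence $P(z)=(u+d)(u-d)=u^2-d^2$, while $w-a_1=d$ and $w-a_2=-d$ give $P(w)=-d^2$. Therefore the three quantities entering the inequality simplify to $P(z)-P(w)=u^2$, $\;z-w=u$, and $P'(z)=2u$. This is the crux: every term is expressed through the single element $u$.

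Substituting into \eqref{SST} with $n=2$ (so $\tfrac{n-1}{n}=\tfrac12$), the left-hand side becomes $u^2(u^2)^*=u\,u\,u^*u^*=(uu^*)^2$ by commutativity, while the right-hand side becomes $\tfrac14\,u\,(2u)(2u)^*u^*=u\,u\,u^*u^*=(uu^*)^2$. Thus both sides coincide and \eqref{SST} holds with \emph{equality}; inequality \eqref{ST} then follows a fortiori, and taking norms recovers the weak form of Conjecture~\ref{CSMALE} (again with equality, since $\tfrac{\|P(z)-P(w)\|}{\|z-w\|}=\|u\|=\tfrac12\|P'(z)\|$). The only points needing care are that $\tfrac12$ is a legitimate scalar multiplier even when $\mathcal{A}$ is non-unital, and that the reordering $u^2(u^*)^2=(uu^*)^2$ genuinely uses commutativity; both are routine.

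For completeness I would record the structural reason the argument works and why it does not extend: by Gelfand--Naimark $\mathcal{A}\cong C_0(X)$, so the order is pointwise and the degree-$2$ identity above is exactly the (equality) scalar Smale bound applied fibrewise. The obstruction to higher degrees is twofold and is where I expect the real difficulty to sit. First, the open classical Smale conjecture reappears fibrewise, so no fully general argument can be expected. Second, and more basically, a degree-$n$ polynomial over $C_0(X)$ need not admit any root of $P'$ in $\mathcal{A}$, so even the existence of a critical point $w\in\mathcal{A}$ can fail once $n\geq3$; any attempt at the general case would first have to confront this.
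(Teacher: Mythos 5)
Your proposal is correct and takes essentially the same route as the paper: the paper likewise treats the general statement as a conjecture, proves only the degree-$2$ case, locates the unique critical point $w=\tfrac{1}{2}(a_1+a_2)$, and verifies by direct commutative computation that (\ref{SST}) holds with equality. Your substitution $u=z-w$, $d=\tfrac{1}{2}(a_2-a_1)$ is only a cosmetic repackaging of the paper's $x=z-a_1$, $y=z-a_2$ (note $x+y=2u$); both arguments reduce to the same identity $(P(z)-P(w))(P(z)-P(w))^*=\tfrac{1}{4}\,(z-w)P'(z)P'(z)^*(z-w)^*$.
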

\begin{proposition}
	Conjecture  \ref{STRONGERCSMALE}  implies Conjecture \ref{CSMALE}.
\end{proposition}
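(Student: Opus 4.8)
The plan is to deduce the norm inequalities asserted by Conjecture \ref{CSMALE} directly from the operator inequalities of Conjecture \ref{STRONGERCSMALE}, using three standard facts valid in any C*-algebra $\mathcal{A}$: the C*-identity $\|x\|^2 = \|xx^*\|$ for every $x \in \mathcal{A}$; the monotonicity of the norm on positive elements, namely that $0 \le S \le T$ forces $\|S\| \le \|T\|$; and submultiplicativity $\|xy\| \le \|x\|\,\|y\|$. Commutativity of $\mathcal{A}$ plays no role in this particular reduction, so the argument will go through for the general C*-algebraic setting stated.

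First I would fix $z \in \mathcal{A}$ that is not a critical point of $P$, and take the critical point $w$ whose existence is guaranteed by Conjecture \ref{STRONGERCSMALE}; the very same $w$ will serve as the witness for Conjecture \ref{CSMALE}. Since $z$ is not critical while $w$ is, we have $z \ne w$, hence $\|z-w\| > 0$, so the quotients appearing in Conjecture \ref{CSMALE} are well defined. Suppose first that alternative (\ref{ST}) holds. Writing $A := P(z)-P(w)$ and $C := (z-w)P'(z)$, the right-hand side of (\ref{ST}) is exactly $CC^*$ while the left-hand side is $AA^*$, and both are positive elements of $\mathcal{A}$. Applying norm monotonicity to $0 \le AA^* \le CC^*$ and then the C*-identity to each side converts (\ref{ST}) into $\|A\|^2 \le \|C\|^2$, that is $\|P(z)-P(w)\| \le \|(z-w)P'(z)\|$. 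Submultiplicativity then gives $\|(z-w)P'(z)\| \le \|z-w\|\,\|P'(z)\|$, and dividing through by $\|z-w\|$ yields the first inequality of Conjecture \ref{CSMALE}.

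The second alternative (\ref{SST}) is handled in exactly the same way: the scalar $\left(\tfrac{n-1}{n}\right)^2$ passes through the norm unchanged, so monotonicity and the C*-identity give $\|P(z)-P(w)\|^2 \le \left(\tfrac{n-1}{n}\right)^2\|(z-w)P'(z)\|^2$, and taking square roots produces the factor $\tfrac{n-1}{n}$, which after submultiplicativity and division by $\|z-w\|$ is precisely the second inequality of Conjecture \ref{CSMALE}. Since each branch of the strong-form disjunction maps to the corresponding branch of the weak-form disjunction through the same critical point $w$, the logical structure is preserved and the implication follows. I do not expect a genuine obstacle in this argument; the only point deserving care is the monotonicity step, but $\|S\| \le \|T\|$ whenever $0 \le S \le T$ is a routine property of positive elements in a C*-algebra.
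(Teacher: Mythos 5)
Your proposal is correct and follows essentially the same route as the paper's own proof: take norms in the operator inequalities, use the C*-identity $\|x\|^2=\|xx^*\|$ together with monotonicity of the norm on positive elements, then apply submultiplicativity and divide by $\|z-w\|$. Your version is slightly more explicit than the paper's (naming the monotonicity fact, noting $z\neq w$ so the quotient is defined, and observing that commutativity is not needed), but the argument is the same.
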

\begin{proof}
	Taking norm in Inequality (\ref{ST}) we get 
	\begin{align*}
	\|P(z)-P(w)\|^2&=	\|(P(z)-P(w))(P(z)-P(w))^*\|\leq  \|(z-w)P'(z)P'(z)^*(z-w)^*\|\\
	&= \|(z-w)P'(z)\|^2\leq  \|z-w\|^2\|P'(z)\|^2.
	\end{align*}
Similarly by taking norm in Inequality (\ref{SST}) we get the second inequality in Conjecture  \ref{CSMALE}.
\end{proof}
  \begin{theorem}\label{CH}
  	Conjecture  \ref{CSMALE}  holds for degree 2 C*-algebraic polynomials over   C*-algebras. 
  \end{theorem}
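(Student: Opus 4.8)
The plan is to read the statement in the setting where Conjecture \ref{CSMALE} is actually posed: over a \emph{commutative} C*-algebra (the abstract states the result ``over commutative C*-algebras''), so throughout I take $\mathcal{A}$ commutative, and I will explain at the end why this hypothesis cannot be dropped. For $n=2$ the whole question collapses to one algebraic identity. First I would read off the derivative from the given definition: for $P(z)=(z-a_1)(z-a_2)$ one has $P'(z)=(z-a_2)+(z-a_1)=2z-a_1-a_2$. Solving $P'(w)=0$ produces the \emph{unique} critical point $w=\tfrac12(a_1+a_2)$, which is a genuine element of $\mathcal{A}$ (scalar multiplication by $\tfrac12$ is always available), so the ``there exists a critical point'' clause is automatic and we are forced to test exactly this $w$.

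Next I would establish the key identity. Expanding and using commutativity, $P(z)=z^2-(a_1+a_2)z+a_1a_2$, whence
\begin{align*}
P(z)-P(w)=(z^2-w^2)-(a_1+a_2)(z-w)=(z-w)(z+w)-2w(z-w)=(z-w)^2,
\end{align*}
where I used $a_1+a_2=2w$ together with commutativity to write $(z-w)(z+w)=z^2-w^2$. In the same way $P'(z)=2z-(a_1+a_2)=2(z-w)$. Since $z$ is assumed not to be a critical point, $P'(z)=2(z-w)\neq0$, so $z-w\neq0$ and division by $\|z-w\|$ is legitimate.

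Finally I would pass to C*-norms. Because every element of a commutative C*-algebra is normal, $\|(z-w)^2\|=\|z-w\|^2$, so
\begin{align*}
\frac{\|P(z)-P(w)\|}{\|z-w\|}=\frac{\|(z-w)^2\|}{\|z-w\|}=\|z-w\|=\tfrac12\|P'(z)\|=\frac{n-1}{n}\|P'(z)\|\qquad(n=2).
\end{align*}
This verifies the second (hence a fortiori the first) alternative of Conjecture \ref{CSMALE}, and in fact attains it. Moreover a further appeal to commutativity gives $(P(z)-P(w))(P(z)-P(w))^{*}=\big[(z-w)(z-w)^{*}\big]^{2}=\tfrac14(z-w)P'(z)P'(z)^{*}(z-w)^{*}$, i.e.\ equality in the strong form (\ref{SST}), so the argument simultaneously settles the degree-$2$ case of the stronger conjecture.

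The step that carries all the weight is the factorization $P(z)-P(w)=(z-w)^{2}$, and this is exactly where commutativity is indispensable. In a noncommutative algebra the identical expansion yields instead $P(z)-P(w)=(z-w)^{2}+\tfrac12\big((z-w)(a_1-a_2)-(a_1-a_2)(z-w)\big)$, whose surviving commutator term is not controlled by $\|z-w\|^{2}$ (one can make it of order $\|z-w\|\,\|a_1-a_2\|$ in $M_2(\mathbb{C})$), so both estimates fail for small $\|z-w\|$. This is precisely why Conjecture \ref{CSMALE} is formulated only for commutative $\mathcal{A}$, and the proof is not expected to—indeed cannot—extend beyond that setting. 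Thus the only care genuinely needed is to confirm the identity and the nonvanishing of $z-w$; there is no analytic obstacle once the correct (commutative) arena is fixed.
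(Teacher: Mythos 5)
Your proof is correct and takes essentially the same approach as the paper: both identify the unique critical point $w=\tfrac{1}{2}(a_1+a_2)$ and reduce the claim to the identity $P(z)-P(w)=(z-w)^2$ (the paper obtains this via the substitution $x=z-a$, $y=z-b$, writing everything as $\tfrac{1}{16}(x+y)^2(x^*+y^*)^2$), verifying the strong form (\ref{SST}) with equality and hence the norm inequality with constant $\tfrac{n-1}{n}=\tfrac12$. Your additional touches---invoking normality to get $\|(z-w)^2\|=\|z-w\|^2$ directly, and the commutator computation showing why the argument breaks down noncommutatively---are sound but do not alter the underlying argument.
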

  \begin{proof}
  	We prove that Conjecture  \ref{STRONGERCSMALE}  holds and hence 	Conjecture  \ref{CSMALE} holds. Let $\mathcal{A}$ be a commutative C*-algebra and let $P(z)\coloneqq (z-a)(z-b)$, $a, b \in \mathcal{A}$. Then $P'(z)=2z-(a+b)$ for all $z\in \mathcal{A}$. Let $z\in \mathcal{A}$ be such that $P'(z)\neq0$. Note that $P'$ has only one zero, namely $c\coloneqq\frac{a+b}{2}$. To show Inequality (\ref{SST}), we need to show that 
  	\begin{align*}
  		(P(z)-P(c))(P(z)-P(c))^*\leq \frac{1}{4} (z-c)P'(z)P'(z)^*(z-c)^*.	
  	\end{align*}
Define $x\coloneqq z-a$ and $y\coloneqq z-b$.   Then
  \begin{align*}
  	&z-c=z-\frac{a+b}{2}=\frac{z-a+z-b}{2}=\frac{x+y}{2},\\
  	&P'(z)=z-a+z-b=x+y,\\
  	&P(c)=\left(\frac{a+b}{2}-a\right)\left(\frac{a+b}{2}-b\right)=\frac{-(b-a)^2}{4}=\frac{-(x-y)^2}{4}, 
  \end{align*}
and 
\begin{align*}
(P(z)-P(c))(P(z)-P(c))^*=\left(xy+\frac{(x-y)^2}{4}\right)\left(xy+\frac{(x-y)^2}{4}\right)^*=\frac{(x+y)^2(x^*+y^*)^2}{16}.
\end{align*}
Therefore
\begin{align*}
	&\frac{1}{4} (z-c)P'(z)P'(z)^*(z-c)^*-(P(z)-P(c))(P(z)-P(c))^*\\
	&=\frac{1}{4}\left(\frac{x+y}{2}\right)(x+y)(x^*+y^*)\left(\frac{x^*+y^*}{2}\right)-\frac{1}{16}(x+y)^2(x^*+y^*)^2\\
	&=\frac{1}{16}((x+y)^2(x^*+y^*)^2-(x+y)^2(x^*+y^*)^2)=0.
\end{align*}
  \end{proof}
Based on Theorem \ref{HIGHER} we formulate the following higher order conjecture. 
\begin{conjecture}\label{HIGHERMEAN}
\textbf{(Higher Order C*-algebraic Smale Mean Value  Conjecture)
	Let $\mathcal{A}$ be a commutative  C*-algebra.	Let $P(z)\coloneqq (z-a_1)\cdots (z-a_n)$ be a polynomial of degree $n\geq 2$ over $\mathcal{A}$, $a_1, \dots, a_n \in \mathcal{A}$. If $z\in\mathcal{A}$ is not a critical point of $P$, then there exists a critical point $w\in \mathcal{A}$ of $P$ such that 
\begin{align*}
	\frac{\|P^{(k)}(z)\|}{k!}	\frac{\|P(z)-P(w)\|^{k-1}}{\|P'(z)\|^k}\leq 4^{k-1}, \quad \forall 2\leq k \leq n.	
\end{align*}}
\end{conjecture}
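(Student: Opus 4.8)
The plan is to pass to the Gelfand picture. Since $\mathcal A$ is a commutative C*-algebra, adjoining a unit if necessary, the Gelfand--Naimark theorem gives an isometric $*$-isomorphism $\mathcal A\cong C(X)$ with $X$ compact Hausdorff, under which every element becomes a continuous $\mathbb C$-valued function and the norm is the supremum norm. For each character $\chi\in X$ the C*-polynomial $P$ specializes to the scalar polynomial $P_\chi(t)\coloneqq\prod_{i=1}^{n}\bigl(t-a_i(\chi)\bigr)$, and differentiation commutes with evaluation, so $P^{(k)}(z)(\chi)=P_\chi^{(k)}\bigl(z(\chi)\bigr)$. In particular $w\in\mathcal A$ is a critical point of $P$ exactly when $w(\chi)$ is a critical point of $P_\chi$ for every $\chi$, i.e. a continuous selection of fibrewise critical points. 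The first step is therefore to apply the classical higher-order Smale theorem (Theorem \ref{HIGHER}) fibrewise: at each $\chi$ with $P_\chi'\bigl(z(\chi)\bigr)\neq0$ it produces a critical point $w_\chi\in\mathbb C$ of $P_\chi$ satisfying all $n-1$ scalar inequalities simultaneously.

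Two genuinely different difficulties then remain, and I would separate the cases $k=n$ and $2\le k<n$. The case $k=n$ is the tractable flagship: here $P^{(n)}(z)=n!\,\mathbf 1$, so $\|P^{(n)}(z)\|=n!$ is a constant rather than the supremum of a varying function, and the target collapses to $\|P(z)-P(w)\|^{n-1}\le 4^{\,n-1}\|P'(z)\|^{n}$. Because $t\mapsto t^{n-1}$ is increasing, taking the supremum over $\chi$ of the fibrewise bound $|P_\chi(z)-P_\chi(w)|^{n-1}\le 4^{\,n-1}|P_\chi'(z)|^{n}$ yields exactly this statement. Thus for $k=n$ the only thing to arrange is a single continuous critical element $w\in C(X)$ realizing the fibrewise Smale bound at every character at once, which isolates the selection problem. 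I would attack it by noting that $P'$ is a polynomial of degree $n-1$ whose leading coefficient is the unit $n\,\mathbf 1$, so its fibrewise roots move continuously with $\chi$, and attempt a local-to-global gluing (a partition of unity over $X$) of the fibrewise-good critical branches, the potential obstruction being monodromy of the roots at characters where $P_\chi'$ acquires a multiple root.

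The main obstacle is the range $2\le k<n$, where $\|P^{(k)}(z)\|=\sup_\chi|P_\chi^{(k)}(z(\chi))|$ is the supremum of a genuinely $\chi$-varying function. The quantity to be bounded, $\|P^{(k)}(z)\|\cdot\|P(z)-P(w)\|^{k-1}$, is then a product of supremum norms of two \emph{distinct} varying elements of $C(X)$, and in a commutative C*-algebra one has only $\|fg\|\le\|f\|\,\|g\|$, with equality failing unless $|f|$ and $|g|$ attain their maxima at a common character. Consequently the supremum of the fibrewise product is dominated by, but in general strictly smaller than, the product of suprema appearing in the conjecture, so the fibrewise inequalities do \emph{not} reassemble into the claimed C*-norm bound. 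I expect this supremum-versus-product mismatch, rather than the selection issue, to be the crux.

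My plan to overcome it is to seek a fibrewise \emph{strong form} in the spirit of Conjecture \ref{STRONGERCSMALE}: an inequality between positive elements of $\mathcal A$ that, being an order relation, passes to norms, and from which alignment of the maxima of $|P_\chi^{(k)}(z)|$ and $|P_\chi(z)-P_\chi(w)|$ can be extracted for the specific polynomials at hand. The degree-two computation of Theorem \ref{CH}, where the exact algebraic identity $P(z)-P(c)=-\tfrac14\,P'(z)^2$ forces every fibre to carry the same rigid relation and so makes the competing suprema align automatically, is the template I would try to generalize. Failing a full generalization, I would first settle small degrees and the structurally simplest spaces $X$, such as totally disconnected or one-dimensional $X$, where both the continuous selection and the competing suprema can be controlled directly.
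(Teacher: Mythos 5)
You have not proved the statement, and you are candid about this: your text is a research program whose two central difficulties (continuous selection of fibrewise critical points, and the product-of-suprema versus supremum-of-products mismatch for $2\le k<n$) are correctly identified but left unresolved. For calibration: the paper itself offers no proof of Conjecture~\ref{HIGHERMEAN} in general either --- it is posed as open, and only the degree-$2$ case is established, by the same exact identity you cite as a template, namely $P(z)-P(c)=\tfrac{1}{4}P'(z)^{2}$ (in the paper's notation $P(z)-P(c)=\tfrac{1}{4}(x+y)^{2}$ with $P'(z)=x+y$), which immediately yields $\|P(z)-P(c)\|\le\tfrac14\|P'(z)\|^{2}$ and hence the $k=2$ bound. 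So your diagnosis of the landscape is sound, but as a proof attempt the proposal has genuine gaps beyond the two you flag.

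Concretely: (a) the hypothesis $P'(z)\neq 0$ in $\mathcal{A}$ only says $P_\chi'(z(\chi))\neq 0$ at \emph{some} character, not all; on the closed set where $P_\chi'(z(\chi))=0$ the fibrewise Theorem~\ref{HIGHER} is unavailable, yet those fibres still contribute to $\|P(z)-P(w)\|$, so even your ``tractable'' $k=n$ reduction (taking suprema of fibrewise bounds) does not go through as stated. (b) The partition-of-unity gluing cannot work in principle: the fibrewise critical set is a finite subset of $\mathbb{C}$, hence not convex, and a convex combination of roots of $P_\chi'$ is in general not a root. (c) The monodromy obstruction you mention as ``potential'' is in fact fatal to any fibrewise-selection strategy, because critical points may fail to exist in $\mathcal{A}$ at all. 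Take $\mathcal{A}=C(\mathbb{T})$ and $P(z)=z(z-1)(z-g)$ with $g\in C(\mathbb{T})$ a loop winding once around $e^{i\pi/3}$ and not around $e^{-i\pi/3}$; then $P'(z)=3z^{2}-2(1+g)z+g$, and any $w\in C(\mathbb{T})$ with $P'(w)=0$ would make $3w-(1+g)$ a continuous square root of $(1+g)^{2}-3g=g^{2}-g+1=(g-e^{i\pi/3})(g-e^{-i\pi/3})$, which has odd winding number and hence no continuous square root. So $P$ has no critical point in $\mathcal{A}$ whatsoever, and the conclusion ``there exists a critical point $w\in\mathcal{A}$'' cannot be reached by any argument --- this shows the Gelfand-fibrewise route cannot prove the conjecture as literally stated for all commutative C*-algebras, and suggests that any successful treatment must either assume the existence of critical points in $\mathcal{A}$ (e.g.\ via a factorization of $P'$ over $\mathcal{A}$, as in the degree-$2$ case where $c=\tfrac{a+b}{2}$ always exists) or restrict the class of algebras, as your fallback to totally disconnected or contractible $X$ implicitly does.
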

  \begin{theorem}
	Conjecture \ref{HIGHERMEAN} holds for degree 2 C*-algebraic polynomials. 
\end{theorem}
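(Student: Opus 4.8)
The plan is to observe that for a degree $2$ polynomial we have $n = 2$, so the universal quantifier $\forall\, 2 \le k \le n$ in Conjecture \ref{HIGHERMEAN} collapses to the single case $k = 2$. Hence only one inequality must be verified, namely that there is a critical point $w$ with
\begin{align*}
\frac{\|P''(z)\|}{2!}\,\frac{\|P(z)-P(w)\|}{\|P'(z)\|^2} \le 4.
\end{align*}

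First I would reuse verbatim the setup and computations from the proof of Theorem \ref{CH}. Writing $P(z) = (z-a)(z-b)$, the unique critical point is $c \coloneqq \frac{a+b}{2}$, and with the substitutions $x \coloneqq z-a$, $y \coloneqq z-b$ one has $P'(z) = x+y$ and $P(z)-P(c) = xy + \frac{(x-y)^2}{4} = \frac{(x+y)^2}{4}$. I would take $w = c$ throughout. Next I would compute the second derivative: since $P(z) = z^2 - (a+b)z + ab$ (commutativity being used here), we get $P''(z) = 2$, that is, twice the unit of $\mathcal{A}$, so that $\|P''(z)\| = 2$.

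Substituting these facts, the left-hand side becomes
\begin{align*}
\frac{\|P''(z)\|}{2!}\,\frac{\|P(z)-P(c)\|}{\|P'(z)\|^2} = \frac{2}{2}\cdot \frac{\tfrac14\|(x+y)^2\|}{\|x+y\|^2} = \frac{\|(x+y)^2\|}{4\,\|x+y\|^2}.
\end{align*}
The last step is to invoke submultiplicativity of the C*-norm, $\|(x+y)^2\| \le \|x+y\|^2$, which immediately bounds the expression by $\tfrac14$, hence a fortiori by $4$. This establishes the required inequality, indeed with a large margin to spare.

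I do not anticipate a genuine obstacle: the only case to check is $k = 2$, and the crucial identities for $P'(z)$, $P(z)-P(c)$ are already established in Theorem \ref{CH}. The one point deserving care is the interpretation of $P''(z) = 2$, which presupposes that $\mathcal{A}$ is unital (or that one passes to the unitization) so that $\|P''(z)\| = 2\|1\| = 2$ is well defined; once this is granted, the estimate is a one-line consequence of norm submultiplicativity.
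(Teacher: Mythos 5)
Your proposal is correct and follows essentially the same route as the paper: reduce to the single case $k=2$, take $w=c=\frac{a+b}{2}$, reuse the identities $P'(z)=x+y$ and $P(z)-P(c)=\frac{(x+y)^2}{4}$ from Theorem \ref{CH}, and conclude via $\|(x+y)^2\|\leq\|x+y\|^2$ that the quantity is at most $\frac{1}{4}\leq 4$. The only cosmetic difference is that the paper extracts $\|P(z)-P(c)\|=\frac{1}{4}\|(x+y)^2\|$ through the C*-identity $\|a\|^2=\|aa^*\|$ rather than taking the norm of $\frac{(x+y)^2}{4}$ directly, and your remark about unitality (so that $\|P''(z)\|=2$ makes sense) is a point the paper leaves implicit.
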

\begin{proof}
We continue from the proof of Theorem \ref{CH}. Let $z\in \mathcal{A}$ be not a zero of $P'$. We have $P''(z)=2$. Hence 
\begin{align*}
	&\|P''(z)\|=2, \quad \|P'(z)\|=\|x+y\|,\\
	\|P(z)-P(c)\|^2&=\|(P(z)-P(c))(P(z)-P(c))^*\|=\frac{1}{16}\|(x+y)^2(x^*+y^*)^2\|\\
	& =\frac{1}{16}\|(x+y)^2((x+y)^2)^*\| =\frac{1}{16}\|(x+y)^2\|^2\leq \frac{1}{16}\|x+y\|^4.
\end{align*}
Therefore 
\begin{align*}
	\frac{\|P^{(2)}(z)\|}{2!}	\frac{\|P(z)-P(c)\|^{2-1}}{\|P'(z)\|^2}=\frac{2}{2}	\frac{\left(\frac{\|(x+y)^2\|}{4}\right)}{\|x+y\|^2}\leq \frac{1}{4}\frac{\|x+y\|^2}{\|x+y\|^2}=\frac{1}{4}=4^{2-1}.
\end{align*}
\end{proof}   
We state Conjecture \ref{LPCONJECTURE} for C*-algebraic polynomials as follows. 
\begin{conjecture}\label{DYNAMIC}
\textbf{(C*-algebraic Miles-Leighton-Pilgrim Dynamics Conjecture) 	
	Let $\mathcal{A}$ be a  commutative  C*-algebra.  	Let $P(z)\coloneqq (z-a_1)\cdots (z-a_n)$ be a polynomial of degree $n\geq 2$ over $\mathcal{A}$, $a_1, \dots, a_n \in \mathcal{A}$. If $P(0)=0$ and $P'(0)=1$, then   there exists a critical point $w\in \mathcal{A}$ of $P$ such that 
	\begin{align*}
		\frac{\|P(w)\|}{\|w\|}\leq 1
	\end{align*}	
	and 
	\begin{align*}
		P^m(w) \to 0\quad \text{ as } \quad m \to \infty.
\end{align*}}
\end{conjecture}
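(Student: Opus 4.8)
The plan is to pass to the Gelfand--Naimark picture and reduce the conjecture to a \emph{fibrewise} statement about scalar polynomials, for which the classical Miles--Leighton--Pilgrim theorem supplies the pointwise input. Since $P'(0)=1$ forces $\mathcal{A}$ to be unital, write $\mathcal{A}\cong C(X)$ with $X$ a compact Hausdorff space, so that each $z\in\mathcal{A}$ is a continuous function $z:X\to\mathbb{C}$ and the coefficient functions $a_1,\dots,a_n$ determine, for every $x\in X$, a scalar polynomial $P_x(\zeta)\coloneqq(\zeta-a_1(x))\cdots(\zeta-a_n(x))$. Because each character is an algebra homomorphism, evaluation at $x$ intertwines both polynomial evaluation and iteration: $P(w)(x)=P_x(w(x))$ and $P^m(w)(x)=P_x^m(w(x))$. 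The hypotheses $P(0)=0$ and $P'(0)=1$ translate into $P_x(0)=0$ and $P_x'(0)=1$ for all $x$, so each $P_x$ is a normalized monic degree-$n$ polynomial with continuously varying coefficients, and $0$ is never a critical point since $P_x'(0)=1\neq 0$. A critical point $w\in\mathcal{A}$ of $P$ is precisely a continuous $w:X\to\mathbb{C}$ with $P_x'(w(x))=0$ for every $x$; and since $\|P(w)\|=\sup_x\abs{P_x(w(x))}$ while $\|w\|=\sup_x\abs{w(x)}$, the norm inequality $\|P(w)\|\le\|w\|$ follows at once from the pointwise bounds $\abs{P_x(w(x))}\le\abs{w(x)}$, while $P^m(w)\to0$ in $\mathcal{A}$ is exactly the \emph{uniform} convergence $\sup_x\abs{P_x^m(w(x))}\to0$.

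Next I would exploit the dynamical structure forced by the normalization: for each $x$ the origin is a fixed point of $P_x$ with multiplier $P_x'(0)=1$, hence a \emph{parabolic} fixed point, and by the Leau--Fatou flower theorem the local dynamics organizes into attracting and repelling petals, with $P_x^m(\,\cdot\,)\to0$ exactly on points that eventually enter an attracting petal. The classical Miles--Leighton--Pilgrim theorem (a theorem for $n=2,3$, conjectural beyond) provides, for each individual $x$, a critical point $w(x)$ of $P_x$ lying in an attracting petal and satisfying $\abs{P_x(w(x))}\le\abs{w(x)}$. Thus the algebraic and the pointwise-dynamical requirements are met fibre by fibre; what remains is to organize these fibrewise choices into a single element of $\mathcal{A}$.

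The genuinely C*-algebraic content is then the \emph{continuous selection}: among the $n-1$ critical points of $P_x$ one must choose a branch $x\mapsto w(x)$ that is continuous on $X$ and remains inside an attracting petal. Away from the discriminant locus, where critical points collide, the roots of $P_x'$ depend continuously on $x$ by the implicit function theorem, so locally one selects the branch sitting in the attracting petal; the difficulty is to make this choice globally consistent across collisions and monodromy. I would try to show that the subset of the critical-point cover lying in attracting petals is open and closed, so that a connectedness/covering-space argument yields a global continuous section; note that Michael's selection theorem is unavailable here because the fibres are finite and non-convex.

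Finally, and this is where I expect the main difficulty, one must upgrade pointwise convergence to uniform convergence $\sup_x\abs{P_x^m(w(x))}\to0$. Parabolic convergence is slow, of order $1/m$, with rate governed by the resonant coefficient $c_2(x)$ in the expansion $P_x(\zeta)=\zeta+c_2(x)\zeta^2+\cdots$; uniformity should follow from compactness of $X$ together with continuous dependence of the Fatou coordinate, \emph{provided} $c_2$ stays bounded away from $0$, but it can break down precisely where $c_2(x)\to0$ and the petal structure degenerates, i.e.\ where the multiplicity of the parabolic point jumps. Controlling this degeneration uniformly over $X$ is the crux. For $n\ge4$ there is the additional, more fundamental obstruction that the scalar Miles--Leighton--Pilgrim conjecture is itself open, so for general degree the argument is necessarily conditional on that conjecture; the unconditional reach of this strategy is exactly the degrees $n=2,3$ for which the fibrewise input is a theorem, with the continuous-selection and uniform-convergence steps constituting the new C*-algebraic work.
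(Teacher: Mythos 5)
The statement you were asked to prove is one of the paper's \emph{conjectures}: the paper offers no proof of it (its theorems only settle the degree-$2$ cases of Conjectures \ref{CSMALE}, \ref{HIGHERMEAN} and \ref{DUALSMALE}), so there is no argument of the paper to compare yours against, and the only question is whether your proposal closes the conjecture. It does not, and by your own admission the two steps you leave open are genuine obstructions rather than technicalities. First, the continuous-selection step can fail outright: a critical point of $P$ in $\mathcal{A}\cong C(X)$ is a single continuous function $w$ with $P_x'(w(x))=0$ for every $x$, and monodromy of the roots of $P_x'$ along a loop in $X$ (take $X=S^1$ and a cubic whose two critical points interchange as $x$ traverses the circle) can preclude \emph{any} continuous section, in which case $P$ has no critical point in $\mathcal{A}$ at all; your proposed ``open and closed'' argument also breaks on the discriminant locus, which you cannot excise since the $a_i$ are arbitrary continuous functions --- where critical points collide, membership in an attracting petal is not a locally stable condition, so the attracted part of the critical-point cover need not be open or closed. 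Second, the uniformity step is unresolved in an essential way: parabolic convergence has rate governed by $c_2(x)$ (and by higher resonant terms where $c_2$ vanishes), and compactness of $X$ alone gives no equicontinuity of Fatou coordinates across points where the multiplicity of the parabolic fixed point jumps; you name this as ``the crux'' but supply no mechanism for it. Third, for $n\geq 4$ your fibrewise input is itself the open scalar Miles--Leighton--Pilgrim conjecture, so for general degree the scheme is conditional by construction. In sum, what you have is a reduction of Conjecture \ref{DYNAMIC} to three open problems, not a proof.

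What your Gelfand framework does deliver unconditionally, and what would have matched the paper's modus operandi of verifying its conjectures in degree $2$, is the quadratic case --- and there the selection and uniformity difficulties evaporate for a reason worth recording. If $P(z)=(z-a_1)(z-a_2)$ with $P(0)=a_1a_2=0$ and $P'(0)=-(a_1+a_2)=1$, then pointwise $a_1(x)a_2(x)=0$ and $a_1(x)+a_2(x)=-1$ force $\{a_1(x),a_2(x)\}=\{0,-1\}$ for every $x\in X$, so every fibre polynomial is the \emph{same} map $P_x(\zeta)=\zeta^2+\zeta$. The unique critical point $w=-\tfrac{1}{2}$ is a constant, hence trivially an element of $\mathcal{A}$ (no selection needed); $P(w)=-\tfrac{1}{4}$ gives $\|P(w)\|/\|w\|=\tfrac{1}{2}\leq 1$; and $P^m(w)$ is the constant function whose value is the $m$-th iterate of $-\tfrac{1}{2}$ under $\zeta\mapsto\zeta+\zeta^2$, a sequence in $(-\tfrac{1}{2},0)$ increasing to $0$, so $P^m(w)\to 0$ in norm (uniformity is automatic because nothing depends on $x$). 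Writing this out would have produced a theorem in the spirit of the paper's degree-$2$ results; as it stands, your proposal stops well short of the conjecture it sets out to prove.
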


 \section{C*-algebraic Dubinin-Sugawa Dual  Mean Value  Conjecture}
 We state C*-algebraic version of Conjecture \ref{DUBINSUGAWACONJECTURE} as follows.
 \begin{conjecture}\textbf{(C*-algebraic Dubinin-Sugawa Dual  Mean Value  Conjecture)}\label{DUALSMALE}
  \textbf{Let $\mathcal{A}$ be a  commutative  C*-algebra.  	Let $P(z)\coloneqq (z-a_1)\cdots (z-a_n)$ be a polynomial of degree $n\geq 2$ over $\mathcal{A}$, $a_1, \dots, a_n \in \mathcal{A}$. If $z\in\mathcal{A}$ is not a critical point of $P$, then there exists a critical point $w\in \mathcal{A}$ of $P$ such that 
 \begin{align*}
 \frac{\|P'(z)\|}{\operatorname{deg} (P)}= \frac{\|P'(z)\|}{n}	\leq  \frac{\|P(z)-P(w)\|}{\|z-w\|}.
\end{align*}}	
 \end{conjecture}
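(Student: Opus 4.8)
The plan is to transport the statement to a commutative function algebra by the Gelfand--Naimark theorem, which identifies $\mathcal{A}$ isometrically with $C_0(X)$ for a locally compact Hausdorff space $X$ (with $X$ compact in the unital case). Under this identification $z,a_1,\dots,a_n$ become continuous functions, and for each $x\in X$ the elements $P(z)$ and $P'(z)$ evaluate to the scalar quantities $P_x(z(x))=\prod_{i=1}^n\bigl(z(x)-a_i(x)\bigr)$ and $P_x'(z(x))$, where $P_x(t)=\prod_{i=1}^n(t-a_i(x))$. The three norms in the statement then read as $\|P'(z)\|=\sup_{x}\lvert P_x'(z(x))\rvert$, $\|P(z)-P(w)\|=\sup_x\lvert P_x(z(x))-P_x(w(x))\rvert$ and $\|z-w\|=\sup_x\lvert z(x)-w(x)\rvert$. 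The first thing to settle is the meaning of ``there exists a critical point'': an element $w\in\mathcal{A}$ with $P'(w)=0$ is exactly a continuous selection $x\mapsto w(x)$ of a root of the degree-$(n-1)$ polynomial $P_x'$. For $n=2$ this selection is forced and always available, namely $w=\tfrac{1}{2}(a_1+a_2)$, which is why the base case is clean; for $n\geq 3$ the existence of such a global selection is itself nontrivial and can be obstructed by the monodromy of the roots of $P_x'$ as $x$ traverses loops in $X$.

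Assuming a critical point $w$ has been produced, I would try to prove the inequality by localizing at an extremal point. Let $x^\ast\in X$ be a point (or a limiting net) at which $\|P'(z)\|=\lvert P_{x^\ast}'(z(x^\ast))\rvert$ is attained. The classical Dubinin--Sugawa theory applied to the single scalar polynomial $P_{x^\ast}$ guarantees a critical point $t^\ast$ of $P_{x^\ast}$ with $\lvert P_{x^\ast}(z(x^\ast))-P_{x^\ast}(t^\ast)\rvert\big/\lvert z(x^\ast)-t^\ast\rvert\geq \lvert P_{x^\ast}'(z(x^\ast))\rvert/(n4^n)$, and the conjectural sharp form would replace $n4^n$ by $n$. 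The aim would be to choose the global selection $w$ so that $w(x^\ast)=t^\ast$, patching in the remaining values by a partition-of-unity and continuity argument on $X$, and then to read off $\|P(z)-P(w)\|\geq\bigl|P_{x^\ast}(z(x^\ast))-P_{x^\ast}(w(x^\ast))\bigr|\geq \tfrac{\|P'(z)\|}{n}\,\bigl|z(x^\ast)-w(x^\ast)\bigr|$.

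The hard part, and the reason the general case resists a clean argument, is twofold. First, the quantity on the right of the conjecture is a ratio of two suprema rather than a supremum of ratios: the estimate above controls $\|P(z)-P(w)\|$ only against $\lvert z(x^\ast)-w(x^\ast)\rvert$, whereas the denominator $\|z-w\|=\sup_x\lvert z(x)-w(x)\rvert$ may be attained at some other point, so the two extremal locations must be reconciled---one cannot freely shrink $\lvert z(x)-w(x)\rvert$ away from $x^\ast$ because $w(x)$ is pinned to the roots of $P_x'$. Second, already for a single point ($X$ a singleton, $\mathcal{A}=\mathbb{C}$) the statement with the constant $n$ is precisely the classical Dubinin--Sugawa dual mean value conjecture, which is open for high degree; the commutative C*-setting therefore contains that open problem and, through the ratio-of-suprema coupling, strictly exceeds it. Consequently I would not expect an unconditional proof for all $n$: what is genuinely attainable is the degree-$2$ case, where the unique critical point $c=\tfrac12(a_1+a_2)$ yields equality $\|P(z)-P(c)\|/\|z-c\|=\|P'(z)\|/2$ via the commutative identity $\|u^2\|=\|u\|^2$, together with the weaker bound obtained by applying the Dubinin--Sugawa \emph{theorem} pointwise once a critical point is known to exist.
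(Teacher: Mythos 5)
Your assessment is the right one: the statement itself is a conjecture, open already in the scalar case $\mathcal{A}=\mathbb{C}$ (where it is exactly the classical Dubinin--Sugawa dual mean value conjecture), so no unconditional proof for all $n$ should be expected, and the paper accordingly proves only the degree-$2$ case --- the same case you isolate as attainable. For degree $2$, your route and the paper's agree in substance but differ in presentation: the paper computes directly in the algebra, setting $x=z-a$, $y=z-b$, $c=\frac{a+b}{2}$, finding $P(z)-P(c)=\frac{(x+y)^2}{4}$, $z-c=\frac{x+y}{2}$, $P'(z)=x+y$, and verifying the strong (ordered) form $\frac{1}{4}(z-c)P'(z)P'(z)^*(z-c)^*=(P(z)-P(c))(P(z)-P(c))^*$ with equality; your Gelfand-transform picture is this same computation read pointwise on $X$. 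On one step your write-up is in fact more careful than the paper's: in the dual direction, passing from the strong form to the norm inequality of Conjecture~\ref{DUALSMALE} is not automatic, since taking norms only yields $\frac{1}{2}\|(z-c)P'(z)\|\leq\|P(z)-P(c)\|$ while submultiplicativity $\|(z-c)P'(z)\|\leq\|z-c\|\,\|P'(z)\|$ points the wrong way --- precisely your ``ratio of suprema versus supremum of ratios'' caveat. In degree $2$ this gap closes because $(z-c)P'(z)=\frac{(x+y)^2}{2}$ is a square and the identity $\|u^2\|=\|u\|^2$, valid in any commutative C*-algebra and invoked explicitly by you, gives $\|(z-c)P'(z)\|=\|z-c\|\,\|P'(z)\|$; the paper leaves this step implicit. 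Your further observation that for $n\geq 3$ even the existence of a critical point $w\in\mathcal{A}$ amounts to a continuous selection of roots of $P_x'$ and can be obstructed by monodromy is likewise a genuine issue that the paper's formulation does not address.
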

Similar to the strong form of Conjecture \ref{CSMALE} we have the following strong form of Conjecture \ref{DUALSMALE}.
 \begin{conjecture}\textbf{(C*-algebraic Dubinin-Sugawa Dual  Mean Value  Conjecture - strong form})\label{STRONGERDUALSMALE}
\textbf{Let $\mathcal{A}$ be a  commutative  C*-algebra.  	Let $P(z)\coloneqq (z-a_1)\cdots (z-a_n)$ be a polynomial of degree $n\geq 2$ over $\mathcal{A}$, $a_1, \dots, a_n \in \mathcal{A}$. If $z\in\mathcal{A}$ is not a critical point of $P$, then there exists a critical point $w\in \mathcal{A}$ of $P$ such that 
	\begin{align*}
	\frac{(z-w)P'(z)P'(z)^*(z-w)^*}{(\operatorname{deg} (P))^2}=\frac{(z-w)P'(z)P'(z)^*(z-w)^*}{n^2}	\leq (P(z)-P(w))(P(z)-P(w))^*.
\end{align*}}	
\end{conjecture}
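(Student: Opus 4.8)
The plan is to prove Conjecture \ref{STRONGERDUALSMALE} by passing to the Gelfand picture and reducing the C*-algebraic inequality to the scalar Dubinin--Sugawa inequality fiber by fiber. By Gelfand--Naimark a commutative C*-algebra is isometrically $*$-isomorphic to $C_0(X)$ for a locally compact Hausdorff space $X$; under this identification $a^*$ is pointwise conjugation and a self-adjoint element is positive exactly when it is pointwise nonnegative, so an inequality $A\le B$ between self-adjoint elements is the family of scalar inequalities $A(t)\le B(t)$, $t\in X$. Since $\mathcal{A}$ is commutative, $(z-w)P'(z)P'(z)^*(z-w)^*=\lvert (z-w)P'(z)\rvert^2$ and $(P(z)-P(w))(P(z)-P(w))^*=\lvert P(z)-P(w)\rvert^2$, so the target inequality is equivalent to
\begin{align*}
\frac{\lvert z(t)-w(t)\rvert\,\lvert P'(z)(t)\rvert}{n}\le \bigl\lvert P(z)(t)-P(w)(t)\bigr\rvert,\qquad t\in X.
\end{align*}
For each fixed $t$, $P_t(\zeta)\coloneqq\prod_{j=1}^n(\zeta-a_j(t))$ is a genuine monic complex polynomial of degree $n$, one has $P'(z)(t)=P_t'(z(t))$, and an element $w\in\mathcal{A}$ is a critical point of $P$ precisely when $w(t)$ is a zero of $P_t'$ for every $t$.

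With this dictionary the statement is fiberwise. Fix $t$. If $P'(z)(t)=0$ the required inequality reads $0\le\lvert P(z)(t)-P(w)(t)\rvert$, which holds for any value $w(t)$. If $P'(z)(t)\ne0$, then $z(t)$ is not a critical point of $P_t$, so the scalar Dubinin--Sugawa conjecture (Conjecture \ref{DUBINSUGAWACONJECTURE}), which the excerpt permits me to assume, produces a critical point $\omega_t$ of $P_t$ with $\tfrac{\lvert P_t'(z(t))\rvert}{n}\le\tfrac{\lvert P_t(z(t))-P_t(\omega_t)\rvert}{\lvert z(t)-\omega_t\rvert}$, that is, exactly the displayed fiberwise inequality with $w(t)=\omega_t$. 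Thus an admissible value exists at every point of $X$, and the whole content of the conjecture (for general $n$) is conditional on the scalar conjecture together with the ability to organize these values into a genuine element of $\mathcal{A}$.

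The crux, and the step I expect to be the real obstacle, is to promote this pointwise existence to a single $w\in C_0(X)=\mathcal{A}$. Two intertwined difficulties arise once $n\ge3$. First, the bare existence of a critical point in $\mathcal{A}$ is already nontrivial: $w$ must be a continuous, vanishing-at-infinity selection of a root of the degree-$(n-1)$ polynomial $P'$, and $C_0(X)$ is not algebraically closed, so monodromy of the roots of $P_t'$ as $t$ traverses $X$ can obstruct any global continuous branch. Second, among the generically $n-1$ critical points at each $t$ one must select the one realizing the Dubinin--Sugawa bound, and this maximizer need not vary continuously, nor match continuously with the choice $w(t)=z(t)$ forced across the locus where $P'(z)(t)=0$. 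I would attack this with selection theory: the zero set $t\mapsto\{\omega:P_t'(\omega)=0\}$ is a compact-valued, upper semicontinuous correspondence, the admissible subset meeting the bound is closed, and on the open locus where the critical points are simple and the maximizer unique they depend continuously on $t$, giving local continuous selections; the task is to glue these across the branch loci. (A Borel-measurable selection is available by standard theorems, but it lands in the enveloping von Neumann algebra rather than in $C_0(X)$, so it only settles a measurable relaxation of the conjecture, not the statement as worded.)

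This selection difficulty evaporates when $n=2$, which is why the conjecture is already accessible there. Then $P'(z)=2z-(a_1+a_2)$ has the unique critical point $c=\tfrac{a_1+a_2}{2}\in\mathcal{A}$, and the computation in the proof of Theorem \ref{CH} yields the identity $P(z)-P(c)=\tfrac12(z-c)P'(z)$, whence $(P(z)-P(c))(P(z)-P(c))^*=\tfrac14(z-c)P'(z)P'(z)^*(z-c)^*$. This is exactly the inequality of Conjecture \ref{STRONGERDUALSMALE} for $n=2$ holding with equality, so no selection is required; for general $n$ the equality is replaced by the genuine fiberwise Dubinin--Sugawa estimate, and the continuous selection of the critical point becomes the heart of the matter.
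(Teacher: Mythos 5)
Your degree-$2$ endgame coincides in substance with the paper's own proof: the paper establishes Conjecture \ref{STRONGERDUALSMALE} for $n=2$ by continuing the computation of Theorem \ref{CH}, writing $x=z-a$, $y=z-b$, $c=\frac{a+b}{2}$ and checking directly that
\begin{align*}
(P(z)-P(c))(P(z)-P(c))^*-\frac{(z-c)P'(z)P'(z)^*(z-c)^*}{4}=\frac{(x+y)^2(x^*+y^*)^2}{16}-\frac{(x+y)^2(x^*+y^*)^2}{16}=0,
\end{align*}
which is exactly your identity $P(z)-P(c)=\frac{1}{2}(z-c)P'(z)$ multiplied by its adjoint; both arguments rest on the fact that $P'$ has the single root $c\in\mathcal{A}$, so no selection issue arises and the inequality holds with equality. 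Where you genuinely depart from the paper is the surrounding framework: the paper works purely algebraically inside $\mathcal{A}$ and never invokes the Gelfand transform, whereas you identify $\mathcal{A}$ with $C_0(X)$, observe that an order inequality between self-adjoint elements is a family of scalar inequalities, and thereby reduce the strong form for general $n$ to (i) the scalar Conjecture \ref{DUBINSUGAWACONJECTURE} in each fiber, which is open, and (ii) a continuous, vanishing-at-infinity selection of a fiberwise-admissible critical point of $P_t'$, which root monodromy can obstruct for $n\geq 3$. This framing is a real addition the paper does not contain: it shows the C*-algebraic conjecture is exactly ``scalar conjecture plus a selection theorem,'' correctly notes that measurable selections only land in the enveloping von Neumann algebra and hence do not settle the statement as worded, and explains structurally why $n=2$ is the accessible case (unique critical point, automatically in $\mathcal{A}$). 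You are careful to present the general-$n$ portion as a conditional reduction rather than a proof, which is the honest position, since the statement is an open conjecture that the paper itself verifies only in degree $2$; your unconditional content thus matches the paper's theorem exactly, packaged in a more conceptual way. One further remark your equality observation makes available: since every element of a commutative C*-algebra is normal, $\|(z-c)^2\|=\|z-c\|^2$, and this is what converts the degree-$2$ equality into the norm-form Conjecture \ref{DUALSMALE}; for the dual inequality that passage is not automatic from submultiplicativity, a point the paper leaves implicit.
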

 \begin{theorem}
 	Conjecture  \ref{DUALSMALE}  holds for degree 2 C*-algebraic polynomials. 
 \end{theorem}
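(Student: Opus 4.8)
The plan is to continue directly from the proof of Theorem \ref{CH}. As before, write $P(z)=(z-a)(z-b)$ with the unique critical point $c\coloneqq\frac{a+b}{2}$, and set $x\coloneqq z-a$, $y\coloneqq z-b$, so that $z-c=\frac{x+y}{2}$ and $P'(z)=x+y$. The hypothesis $P'(z)\neq 0$ is exactly $x+y\neq 0$, which also guarantees $z-c\neq 0$, so with the choice $w=c$ the quotient $\frac{\|P(z)-P(w)\|}{\|z-w\|}$ is well defined. Using commutativity, the computation in Theorem \ref{CH} already records $P(c)=\frac{-(x-y)^{2}}{4}$, whence $P(z)-P(c)=xy+\frac{(x-y)^{2}}{4}=\frac{(x+y)^{2}}{4}$. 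Thus everything reduces to comparing the norms of $\frac{(x+y)^{2}}{4}$, $\frac{x+y}{2}$, and $x+y$.

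The decisive ingredient, which is what distinguishes the dual statement from the Smale statement, is that every element of a commutative C*-algebra is normal, so that $\|u^{2}\|=\|u\|^{2}$ for every $u\in\mathcal{A}$ (the spectral radius equals the norm for normal elements). Applying this with $u=x+y$ gives $\|P(z)-P(c)\|=\frac{1}{4}\|(x+y)^{2}\|=\frac{1}{4}\|x+y\|^{2}$, while $\|z-c\|=\frac{1}{2}\|x+y\|$ and $\|P'(z)\|=\|x+y\|$. Dividing, I expect to obtain
\begin{align*}
\frac{\|P(z)-P(c)\|}{\|z-c\|}=\frac{\frac{1}{4}\|x+y\|^{2}}{\frac{1}{2}\|x+y\|}=\frac{1}{2}\|x+y\|=\frac{\|P'(z)\|}{2}=\frac{\|P'(z)\|}{n},
\end{align*}
so that in fact equality holds in Conjecture \ref{DUALSMALE} for $n=2$, which in particular establishes the required inequality.

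The step I expect to require the most care is precisely the reason one cannot simply imitate the Smale argument by taking norms in the strong form (Conjecture \ref{STRONGERDUALSMALE}). Passing from the positive-element inequality $\frac{1}{n^{2}}(z-w)P'(z)P'(z)^{*}(z-w)^{*}\leq (P(z)-P(w))(P(z)-P(w))^{*}$ to norms yields only $\frac{1}{n}\|(z-w)P'(z)\|\leq\|P(z)-P(w)\|$, and submultiplicativity $\|(z-w)P'(z)\|\leq\|z-w\|\,\|P'(z)\|$ then points in the wrong direction for producing a lower bound on $\frac{\|P(z)-P(w)\|}{\|z-w\|}$. What rescues the degree-$2$ case is that $(z-c)P'(z)=\frac{1}{2}(x+y)^{2}$ is, up to a scalar, the square of a single normal element, for which submultiplicativity is an equality; this is exactly the mechanism that forces $\frac{\|P(z)-P(c)\|}{\|z-c\|}$ up to $\frac{\|P'(z)\|}{n}$ rather than merely bounding it below by something smaller. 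I would therefore present the weak form through the direct norm computation above rather than by attempting to deduce it from the strong form.
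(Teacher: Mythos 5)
Your proof is correct, and it takes a genuinely different --- and in fact more complete --- route than the paper. The paper's own proof establishes the strong form (Conjecture \ref{STRONGERDUALSMALE}) for degree $2$: continuing from Theorem \ref{CH}, it verifies the exact identity
\begin{align*}
(P(z)-P(c))(P(z)-P(c))^*=\frac{(z-c)P'(z)P'(z)^*(z-c)^*}{4},
\end{align*}
and then tacitly concludes the weak norm statement (Conjecture \ref{DUALSMALE}), with no analogue of the proposition that deduces Conjecture \ref{CSMALE} from its strong form. As you correctly observe, that passage is not automatic here: taking norms in the strong form only yields $\tfrac{1}{n}\|(z-w)P'(z)\|\leq\|P(z)-P(w)\|$, and submultiplicativity $\|(z-w)P'(z)\|\leq\|z-w\|\,\|P'(z)\|$ points the wrong way for a lower bound on the difference quotient. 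Your direct computation --- $P(z)-P(c)=\frac{(x+y)^2}{4}$ combined with $\|(x+y)^2\|=\|x+y\|^2$, valid because every element of a commutative C*-algebra is normal --- is precisely the missing ingredient, and it gives the sharper conclusion that equality holds: $\frac{\|P(z)-P(c)\|}{\|z-c\|}=\frac{\|P'(z)\|}{2}$. In short, the paper's route buys the stronger operator-level identity but leaves the deduction of the norm inequality implicit (a genuine gap as written), while your route buys a self-contained, rigorous proof of the stated theorem; combining the paper's identity with your normality observation is what a complete account of both statements would look like.
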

 \begin{proof}
 	We prove Conjecture \ref{STRONGERDUALSMALE} holds. We continue from the proof of Theorem \ref{CH}. Consider 
 	\begin{align*}
 	&(P(z)-P(c))(P(z)-P(c))^*-	\frac{(z-c)P'(z)P'(z)^*(z-c)^*}{2^2}\\
 	&=\frac{(x+y)^2(x^*+y^*)^2}{16}-\frac{1}{4}\frac{(x+y)^2(x^*+y^*)^2}{4}=0.
 	\end{align*}
 \end{proof}
 
\begin{remark}
  	\begin{enumerate}[\upshape(i)]
  	\item Conjectures \ref{CSMALE}, \ref{HIGHERMEAN}, \ref{DYNAMIC} and \ref{DUALSMALE} can be stated for   Banach algebras.
  		\item \textbf{C*-algebraic Sendov conjecture} has been stated in \cite{MAHESHKRISHNA}.
  		\item \textbf{C*-algebraic Schoenberg conjecture} has been stated in \cite{MAHESHKRISHNA2}.
  	\end{enumerate}
  \end{remark}
 \bibliographystyle{plain}
 \bibliography{reference.bib}

\end{document}